\newcommand{\R}{\mathbb R}
\newtheorem*{nota}{Notation}
\newtheorem{theorem}{Theorem}[section]
\newtheorem{definition}[theorem]{Definition}
\newtheorem{lemma}[theorem]{Lemma}
\newtheorem{prop}[theorem]{Proposition}
\newtheorem{conjecture}[theorem]{Conjecture}
\newtheorem{remark}[theorem]{Remark}
\theoremstyle{definition}
\newtheorem{exmp}[theorem]{Example}
\let\phi=\varphi
\newcommand{\integ}[4]{\int\limits_{#1}^{#2}#3\, d#4}
\newcommand{\summe}[3]{\sum\limits_{#1}^{#2}#3}
\newcommand{\abb}[3]{#1\colon #2\rightarrow #3}
\newcommand{\real}[1]{\mathbb{R}^{#1}}
\newcommand{\rem}[1]{}
\DeclareFontFamily{U}{mathb}{\hyphenchar\font45}
\DeclareFontShape{U}{mathb}{m}{n}{
<-6> mathb5 <6-7> mathb6 <7-8> mathb7
<8-9> mathb8 <9-10> mathb9
<10-12> mathb10 <12-> mathb12
}{}
\DeclareSymbolFont{mathb}{U}{mathb}{m}{n}
\DeclareMathSymbol{\llcurly}{\mathrel}{mathb}{"CE}
\DeclareMathSymbol{\ggcurly}{\mathrel}{mathb}{"CF}
\title[On the space of cone geodesics]{On the space of cone geodesics and positive paths of contactomorphisms}
\author{Jakob Hedicke}
\address{ Radboud Universiteit Nijmegen, Heyendaalseweg 135, 6525 AJ NIJMEGEN, The Netherlands} 
\email{jakob.hedicke@gmail.com}
\date{\today}
\begin{document}
\begin{abstract}
Often it is possible to equip the space of all cone geodesics of a strongly convex cone structure with the structure of a smooth contact manifold.
This generalizes the analogous notions for the space of light rays of a Lorentzian spacetime.
After reviewing these constructions on the space of cone geodesics, with a focus on the natural contact structure, we establish a correspondence between positive paths of contactomorphisms in spherical cotangent bundles and certain globally hyperbolic cone structures.

\end{abstract}

\maketitle

\section{Introduction}

In his seminal works on twistor theory Roger Penrose observed that in many physically relevant situations the space of all light rays, i.e., of unparametrized null geodesics of a Lorentzian spacetime, is a smooth manifold that can be naturally equipped with a contact structure, see e.g. \cite{Penrose682, Penrose722, Penrose84}.
Since then the space of light rays has been studied by various authors from a relativistic \cite{Low89, Low06, Bautista14, Bautista17, Hedicke20} and from a contact geometric point of view \cite{Natario04, Chernov10, Chernov102, Hedicke21, Hedicke24, Marin24}.
These results indicate a deep connection between topological and geometric properties of the space of light rays and the causality of the underlying spacetime.
For example, the existence of a smooth structure (possibly non-Hausdorff) on the space of light rays follows from the strong causality of the spacetime \cite{Low89}.
Further results relate the existence of a smooth manifold structure and a natural contact structure to global hyperbolicity \cite{Low06} and causal simplicity \cite{Hedicke20}.
In the case of globally hyperbolic spacetimes the relativistic structure of the spacetime can be related to various notions from contact geometry.
For instance causality is connected to Legendrian linking \cite{Natario04, Chernov10} or the positivity of Legendrian isotopies \cite{Chernov10, Chernov102} in the space of light rays, and the gravitational redshift can be described using cotact forms \cite{Chernov18}.

Recent developments in general relativity lead to generalized notions of spacetimes, such as manifolds with cone structures \cite{Bernard18, Minguzzi192} and Lorentz-Finsler spacetimes \cite{Beem70, Asanov85, Javaloyes20}.
In the setting of cone structures one can define cone geodesics, which are a direct generalization of light rays in Lorentzian manifolds.
For strongly convex cone structures a construction analogous to the case of Lorentzian metrics leads to a space of cone geodesics with similar geometrical and topological properties \cite{Hedickediss, Herrera25, Sanchez25}.

In this note we review the construction of the space of cone geodesics for smooth strongly convex cone structures, with a particular focus on the contact geometric point of view.
We furthermore establish a correspondence between globally hyperbolic cone structures and positive paths of contactomorphisms in spherical cotangent bundles.

\subsection*{Summary of the paper}

In Section 2 we start with reviewing some basic facts about closed cone structures on smooth manifolds and their causality, as well as about Lorentz-Finsler metrics and their geodesics.
This allows to define the notion of cone geodesics and, in the case of certain strongly convex cone structures, to equip the space of all cone geodesics with the structure of a smooth manifold.
We proceed by recalling basic notions from contact geometry, and show that the space of cone geodesics can be equipped with a natural cooriented contact structure.
Finally, we present a proof that in the globally hyperbolic case the space of cone geodesics is contactomorphic to the spherical cotangent bundle of its Cauchy hypersurfaces.

The considerations of Section 3 can be summarized in the following Theorems, establishing a correspondence between certain globally hyperbolic cone structures and positive paths of contactomorphisms.

\begin{theorem}\label{thm1}
Let $(\R\times\Sigma,C)$ be a globally hyperbolic strongly convex cone structure such that the projection to the first coordinate is a Cauchy time function.
Then there exists a positive path of contactomorphism $(\phi_t^C)_{t\in\R}$ of the spherical cotangent bundle $ST^{\ast}\Sigma$, such that up to re-parametrization all cone geodesics are of the form $t\mapsto (t,\pi(\phi^C_t(v)))$.
Here $v\in ST^{\ast}\Sigma$ and $\pi\colon ST^{\ast}\Sigma\rightarrow \Sigma$ denotes the canonical projection.
\end{theorem}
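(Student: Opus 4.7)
The plan is to exploit the contactomorphism $\N(C)\cong ST^{\ast}\Sigma$ established in Section 2 for any Cauchy hypersurface of a globally hyperbolic cone structure, applying it to each level set $\Sigma_t:=\{t\}\times\Sigma$. Since the first-coordinate projection is a Cauchy time function, each $\Sigma_t$ is a Cauchy hypersurface, and I identify it canonically with $\Sigma$ via $(t,x)\mapsto x$. This produces for every $t\in\R$ a contactomorphism $\Psi_t\colon\N(C)\to ST^{\ast}\Sigma$ depending smoothly on $t$ (the smooth dependence should follow from the smooth dependence of the Cauchy hypersurface on $t$ together with the naturality of the construction of $\Psi_t$ reviewed in Section 2). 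Setting $\phi_t^C:=\Psi_t\circ\Psi_0^{-1}$ gives a smooth path of contactomorphisms of $ST^{\ast}\Sigma$ with $\phi_0^C=\id$.

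To verify the trajectory formula, I use that by global hyperbolicity each cone geodesic $\gamma$ intersects $\Sigma_t$ in exactly one point $\gamma(t)$, and the construction of $\Psi_t$ sends $[\gamma]$ to a covector in $ST^{\ast}\Sigma_t$ lying over that intersection point. Thus for $v=\Psi_0([\gamma])$, the projection $\pi(\phi_t^C(v))\in\Sigma$ is precisely the $\Sigma$-component of $\gamma(t)$, which is exactly the claimed formula once $\gamma$ is re-parametrized by the time function $t$ (a valid re-parametrization because $t$ restricted to any cone geodesic is strictly monotonic, again by the Cauchy property).

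The main obstacle is positivity. Writing $\al$ for the cooriented contact form on $ST^{\ast}\Sigma$ used in Section 2, I must show that the contact Hamiltonian $h_t(v):=\al_v\!\left(\partial_s\big|_{s=t}\phi_s^C\circ(\phi_t^C)^{-1}(v)\right)$ is strictly positive. Unpacking the construction of $\Psi_t$ from the Lorentz--Finsler data dual to the cone, the generator of $\phi_t^C$ at a covector $v\in ST^{\ast}\Sigma$ corresponds to the $\Sigma$-projection of the tangent vector to the underlying cone geodesic, re-parametrized so that its $\R$-component equals one. Since $v$ is (a positive multiple of) the spatial part of the future-directed null covector of that geodesic, the pairing $\al(X_t)(v)$ reduces to the evaluation of this null covector on the future-directed tangent of the geodesic modulo the $dt$-component. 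Strong convexity of $C$, together with the transversality of $\partial_t$ to the boundary of $C$ guaranteed by the Cauchy time-function property, forces this pairing to be strictly positive.

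The place where I anticipate the most care is the last paragraph: extracting from the abstract construction of $\Psi_t$ a local coordinate expression for $h_t$ sufficient to read off positivity. I expect this to come down to choosing a defining Lorentz--Finsler pseudo-norm $F$ for $C$, writing $\Psi_t$ in terms of the Legendre transform of $F$ restricted to the cotangent spaces along $\Sigma_t$, and observing that $h_t(v)$ equals $(\partial F^{\ast}/\partial p_0)(v)^{-1}$ (or an equivalent quantity) which is strictly positive precisely because $F^{\ast}$ is strictly convex in the fibre and its null level set is transverse to $\{p_0=0\}$.
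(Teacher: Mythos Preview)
Your definition of $\phi_t^C=\Psi_t\circ\Psi_0^{-1}$ and your verification of the trajectory formula match the paper's Theorem~\ref{thmpospath} exactly. The difference lies in the positivity argument.

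The paper does \emph{not} compute the contact Hamiltonian directly. Instead it observes that $\phi_t^{-1}(F_p)=\rho_0(S(t,p))$, invokes Proposition~\ref{proppositivecausal} (the sky isotopy along the timelike curve $t\mapsto(t,p)$ is positive in $\mathcal{N}_C$), and then uses that $\rho_0$ \emph{reverses} the coorientations---a fact the paper flags explicitly as a warning after Proposition~\ref{Propglobhypcont}. This gives negativity of $\phi_t^{-1}$ and hence positivity of $\phi_t$ in three lines, with all Lorentz--Finsler computations hidden inside the already-established Proposition~\ref{proppositivecausal}.

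Your direct route is also viable---in fact the paper carries out essentially that computation later, in Theorem~\ref{ThmReebPath}, where it shows the stronger statement $\alpha_t^{F_t}(X_t^\phi)\equiv 1$ by writing $\phi_t(v)=g^t_{\tilde w_t}(\tilde w_t,\cdot)$ with $\tilde w_t=\gamma_v'(t)-\partial_t$ and pairing against $d\pi(X_t^\phi)=\tilde w_t$. But your paragraph~3 is not yet a proof: the sentence ``the pairing reduces to the evaluation of this null covector on the future-directed tangent of the geodesic modulo the $dt$-component'' is where the coorientation reversal bites. If $\eta\in\partial C^\ast$ is the null covector dual to $\gamma'$, then $\eta(\gamma')=0$, so the ``spatial'' pairing you describe equals $-\eta(\partial_t)$, and the sign you obtain depends on whether you identify $v\in ST^\ast\Sigma$ with $[\eta|_{T\Sigma}]$ or its negative. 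You never address this, and it is exactly the point the paper isolates in its warning. Your paragraph~4 anticipates the fix (pick the Lorentz--Finsler metric $L=dt^2-F_t^2$ of Lemma~\ref{LemtrivialFinsler} and compute in the unit cotangent bundle of $F_t$), which works, but the heuristic in paragraph~3 as written does not yet pin down the sign. Either follow the paper's sky argument, or make the Legendre-transform computation explicit as in Theorem~\ref{ThmReebPath}.
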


Theorem \ref{thm1} is proved in Section 3.1.

In Section 3.2 we prove the following.

\begin{theorem}\label{thm2}
Let $(f_t)_{\in \R}$ be a positive path of contactomorphisms of $ST^{\ast}\Sigma$.
Then $(f_t)_{\in \R}$ defines a cone structure $C_f$ on $\R\times \Sigma$ that can be naturally equipped with the structure of a locally Lipschitz Lorentz-Finsler space.

If $(f_t)_{\in \R}=(\phi^C_t)_{\in \R}$ is the positive path induced by a globally hyperbolic strongly convex cone structure $(\R\times \Sigma,C)$, then $C_{\phi^C}=C$.
\end{theorem}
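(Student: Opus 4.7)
The plan is to build $C_f$ pointwise from the tangent vectors of the curves that \emph{should} be the cone geodesics of $C_f$ (as dictated by the correspondence in Theorem \ref{thm1}), then exhibit a Lorentz-Finsler function with that prescribed light cone, and finally observe that the compatibility statement is essentially tautological once the construction is spelled out.

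First I would extract from $(f_t)$ its time-dependent contact vector field $Y_t$, so that $\tfrac{d}{dt}f_t = Y_t\circ f_t$, and note that the contact Hamiltonian $H_t := \alpha(Y_t)$ (with $\alpha$ the tautological contact form on $ST^{\ast}\Sigma$) is strictly positive because the path is positive. For each $(s,x)\in\R\times\Sigma$ I would define the \emph{candidate null directions}
\[
N_{s,x} := \bigl\{\partial_t + d\pi(Y_s(w)) : w \in ST^{\ast}_x\Sigma\bigr\}\subset T_{(s,x)}(\R\times\Sigma),
\]
which form a smoothly varying family of embedded spheres because $\pi\colon ST^{\ast}\Sigma\to\Sigma$ is a submersion and $Y_s$ is smooth. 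The cone $C_{f,(s,x)}$ is then taken to be the closed convex cone with apex at the origin generated by $N_{s,x}$, and $C_f$ the union of these cones.

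Next I would check that $C_f$ is a closed cone structure with nonempty interior. Positivity of $H_t$ is used here in the form of a local inequality $\langle dt, v\rangle \geq c(s,x) > 0$ for $v\in N_{s,x}$, with $c$ locally bounded away from zero; this places $N_{s,x}$ strictly in the open half-space $\{dt>0\}$, hence puts $\partial_t$ in the interior of $\conv(N_{s,x})$ and forces each $C_{f,(s,x)}$ to be a proper convex cone. Continuity of the assignment in $(s,x)$ then follows from smoothness of the parametrization of $N_{s,x}$. A Lorentz-Finsler function on $C_f$ is produced as (minus) the Minkowski gauge of the convex body $\conv(N_{s,x}) \subset \{dt = 1\}$, extended to the full cone by positive homogeneity; this gauge depends smoothly on $(s,x)$ away from the convex-hull operation, and the convex hull in general only preserves Lipschitz dependence on parameters, which accounts for the locally Lipschitz regularity in the statement.

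For the compatibility assertion, suppose $C$ is globally hyperbolic and strongly convex, with $(\phi^C_t)$ the positive path provided by Theorem \ref{thm1}. By the conclusion of that theorem, every cone geodesic of $C$ is of the form $t\mapsto (t,\pi(\phi^C_t(v)))$; differentiating at $t=s$ and setting $w=\phi^C_s(v)$ (so that $w\in ST^{\ast}_x\Sigma$ when the curve passes through $x$ at time $s$), the tangent vectors to cone geodesics at $(s,x)$ are exactly the elements of $N_{s,x}$ for the generator $Y_s$ of $(\phi^C_t)$. Strong convexity of $C$ means that the set of null directions at $(s,x)$ is a smooth, strictly convex embedded sphere bounding $C_{(s,x)}$, so $\conv(N_{s,x})$ already has $N_{s,x}$ as its boundary and $C_{\phi^C,(s,x)} = C_{(s,x)}$ follows immediately.

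The main obstacle I anticipate is the verification that $C_f$ genuinely satisfies the axioms of a closed cone structure admitting a (Lipschitz) Lorentz-Finsler representative, rather than being merely a pointwise family of convex cones. The two delicate points are the uniform positivity of $\langle dt,\cdot\rangle$ on $N_{s,x}$ (needed for the interior and for semi-continuity) and the precise regularity one gets after convexification; both hinge on using the positivity of $H_t$ and the smoothness of $(f_t)$ in a quantitative way, and Lipschitz seems to be the best one can ask for in full generality when $N_{s,x}$ is not already convex.
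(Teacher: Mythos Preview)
Your construction is \emph{primal}---you take the convex cone over the set $N_{s,x}$ of projected generating directions---whereas the paper works \emph{dually}. It observes that positivity makes the normalized contact forms $\alpha_t^f:=\alpha/\alpha(X_t^f)$ well defined, realizes each $\alpha_t^f$ as the restriction of the Liouville form to the boundary of a fibrewise star-shaped domain $K_t^f\subset T^{\ast}\Sigma$, and then \emph{defines} $C_f\cap\{dt=1\}$ to be the fibrewise polar $(K_t^f)^{\circ}$; the Lorentz--Finsler function is the explicit $G_f(w_0,w)=w_0-\max_{v\in K_t^f(p)}v(w)$, and local Lipschitz continuity is read off from the Lipschitz behaviour of the convex-hull and polar operators on convex bodies. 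The two definitions agree when $K_t^f$ is fibrewise convex (this covers the compatibility clause, since strong convexity of $C$ forces it), but in general your $\conv(N_{s,x})$ can be strictly larger than $(K_t^f)^{\circ}$: writing $H$ for the gauge of $K_t^f$, one computes $d\pi(Y_s(v))=\nabla_{p}H(v)$, and for non-convex $H$ this gradient map may land outside the polar. The dual route sidesteps this ambiguity and needs no regularity of $N_{s,x}$ whatsoever.

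Two steps in your sketch do not go through as written. First, the claim that $N_{s,x}$ is an embedded sphere is unjustified: the fibrewise map $w\mapsto d\pi(Y_s(w))$ is $v\mapsto\nabla_{p}H(v)$, whose differential is the fibre Hessian of $H$, and for a merely star-shaped $K_s^f$ this can degenerate or fail to be injective; that $\pi$ is a submersion is irrelevant here. Second, your use of positivity is misplaced. Every element of $N_{s,x}$ has $dt$-component equal to $1$ by construction, so the inequality $\langle dt,v\rangle\geq c>0$ is vacuous and does not by itself put $\partial_t$ into the interior of $\conv(N_{s,x})$. What positivity actually delivers is $H_s(w)=\lambda_w(Y_s(w))=\langle w,\,d\pi(Y_s(w))\rangle>0$ for every covector $w$; letting $w$ range over all directions, this pairing inequality is what forces $0$ into the interior of $\conv\{d\pi(Y_s(w))\}$ and hence makes the cone proper. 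With these two points repaired your route can be completed, but be aware that for non-strongly-convex paths it produces a genuinely different (wider) $C_f$ than the paper's polar construction.
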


\subsection*{Acknowledgements}
Many of ideas in this work have been part of my PhD thesis, which was written under the supervision of Stefan Nemirovski and Stefan Suhr and supported by the SFB/TRR 191 “Symplectic Structures in Geometry, Algebra
and Dynamics”.
Further I would like to thank Alberto Abbondandolo for valuable discussions about convex sets.
This work was supported by a Radboud Excellence Fellowship.

\section{Preliminaries}\label{c1}

\subsection{Cone structures and causality}

The notion of cone structure on a smooth manifold $M$ generalizes (time-oriented) Lorentzian metrics by looking at distributions of convex cones in the tangent bundle of $M$ that mimic the properties of the future light cones in the Lorentzian case.
Cone structures can be defined in various levels of generality \cite{Bernard18, Minguzzi192, Javaloyes20}.
In this paper we mostly consider cones of the following type.

\begin{definition}\label{defcone}
A \textbf{closed cone} in a vector space $V$ is a closed subset $C\subset V\setminus\{0\}$ such that 
\begin{itemize}
\item[(i)]$\lambda v \in C$ for all $v\in C$ and $\lambda>0$.
\item[(ii)]If $v\in C$ then $-v\notin C$.
\end{itemize}
The cone $C$ is called \textbf{proper} if $C$ is convex and has a non-empty interior.
A proper cone $C$ is called \textbf{strongly convex} if additionally $\partial C\setminus\{0\}$ is smooth and
\begin{itemize}
\item[(iii)] The second fundamental form of $\partial C$ with respect to an outward pointing direction is positive semi-definite and its kernel at a point $v\in \partial C$ is given by $\{\lambda v|\lambda>0\}$.
\end{itemize}
\end{definition}

\begin{definition}\label{defconestr}
A \textbf{closed cone structure} on a manifold $M$ is a subset $C\subset TM$ such that $C_p:=C\cap T_pM$ is a proper closed cone for all $p\in M$.
It is called \textbf{strongly convex} if $\partial C$ is an embedded hypersurface transverse to the fibres of $TM\setminus\{0\}$ such that $C_p$ is a strongly convex cone for every $p\in M$.
\end{definition}

\begin{nota}
Throughout this paper $\partial C$ will always denote the boundary of $C$ in $TM\setminus \{0\}$, i.e., $\partial C$ does not intersect the $0$ section.
\end{nota}

Analogously to the case of time-oriented Lorentzian metrics one can define the notion of future pointing timelike, causal and null vectors for cone structures.
We call $v\in C^{\circ}$ \textbf{future pointing timelike}, $v\in C$ \textbf{future pointing causal} and $v\in \partial C$ \textbf{future pointing null}.
Moreover, we call $v$ past pointing timelike/causal/null if $-v$ is future pointing timelike/causal/null.
Here $\partial C$ always denotes the boundary of $C$ in $TM\setminus\{0\}$.
A curve $\gamma\colon I\rightarrow M$ is called future pointing timelike/causal/null if $\gamma'(t)$ has this property for all $t\in I$. 
As in the Lorentzian case this allows to define future chronological-, causal- and horismos relations:
\begin{align*}
I^+&:=\{(p,q)\in M\times M| \text{ there exists a future directed timelike curve from $p$ to $q$ }\}\\
J^+&:=\{(p,q)\in M\times M| \text{ there exists a future directed causal curve from $p$ to $q$ }\}\\
E^+&:= J^+\setminus I^+.
\end{align*}
Similarly one defines the past chronological-, causal- and horismos relations $I^-,J^-$ and $E^-$.
The chronological-, causal future/past and horismos of a point $p\in M$ are denoted by $I^{\pm}(p), J^{\pm}(p)$ and $E^{\pm}(p)$, respectively.

In this note we will mostly use the following steps from the causal hierarchy of cone structures.
Recall that a cone structure is \textbf{causal} if there are no future pointing causal loops in $(M,C)$.
For a detailed overview on the causality of cone structures, see \cite{Minguzzi192}.

\begin{definition}\label{defcausal}
A cone structure is called \textbf{strongly causal} if for every open set $U\subset M$ there exists a causally convex open subset $V\subset U$, i.e., a subset $V$ such that every causal curve with endpoints in $V$ is entirely contained in $V$.
A causal cone structure $(M,C)$ is called \textbf{causally simple} if the causal relation $J^+$ is closed as a subset of $M\times M$.
The cone structure $(M,C)$ is called \textbf{globally hyperbolic} if the causal diamonds $J^+(p)\cap J^-(q)\subset M$ are compact for all $p,q\in M$.
\end{definition}

An important characterization of globally hyperbolic Lorentzian spacetimes also holds for cone structures:
A cone structure $(M,C)$ is globally hyperbolic if and only if $M\cong \R\times \Sigma$ and $\{t\}\times \Sigma$ is a smooth \textbf{Cauchy hypersurface} for all $t\in\R$, i.e., a hypersurface transverse to $C$ such that every inextensible causal curve intersects $\Sigma$ in a unique point, see \cite[Theorem 2.42]{Minguzzi192}.

Note that in Lorentzian geometry null geodesics are a conformal invariant \cite{Beem96} and depend only on the light cones of the metric.
This allows to define the notion of cone geodesics for more general cone structures, without making use of a metric.
Cone geodesics are a generalization of light rays in classical spacetimes to the setting of cone structures.

\begin{definition}
A cone geodesic of a cone structure $(M,C)$ is a continuous curve $\abb{\gamma}{I}{M}$ that is locally horismotic.
This means that for every $t_0\in I$ and any open neighbourhood $U$ of $\gamma(t_0)$ there exists an open neighbourhood $\gamma(t_0)\in V\subset U$ such that if $\gamma([t_0-\epsilon,t_0+\epsilon]\cap I)\subset V$, then $\gamma(s_1)\in E_{C_U}^+(\gamma(s_0))$ for all $s_0,s_1\in [t_0-\epsilon,t_0+\epsilon]\cap I$ with $s_0<s_1$.
Here $C_U$ denotes the restriction of the cone structure $C$ to the neighbourhood $U$.
\end{definition}

\subsection{Lorentz-Finsler metrics and cone geodesics}\label{secFinsler}

Strongly convex cone structures naturally correspond to Lorentz-Finsler metrics (up to anisotropic equivalence), see \cite{Javaloyes20}.
Throughout this paper we mostly work with the classical definition of Lorentz-Finsler metrics as in \cite{Beem70}.

\begin{definition}\label{deffinsler}
A continuous function $\abb{L}{TM}{\real{}}$ that is smooth on $TM\setminus\{0\}$ is called a Lorentz-Finsler metric if
\begin{itemize}
\item[(i)] $L(\lambda v)=\lambda^2 v$ for all $v\in TM$ and all $\lambda\in \real{}$.
\item[(ii)] For every $v \in TM\setminus\{0\}$ its Fundamental tensor
$$g_v(u,w):=\frac{d^2}{dsdt}|_{t=s=0}L(v+su+tw)$$
has signature $(-,+,\cdots,+)$.
We call $(M,L)$ a Lorentz-Finsler spacetime.
\end{itemize}
\end{definition}

\begin{exmp}
Let $g$ be a Lorentzian metric on a manifold $M$.
Then the function $L(v):=g(v,v)$ defines a Lorentz-Finsler metric.
\end{exmp}

It follows from \cite[Corollary 5.8]{Javaloyes20} that for any strongly convex cone structure $C$ there exists a Lorentz-Finsler metric $L$ such that $\partial C$ is a connected component of $L^{-1}(0)\setminus\{0\}$.

Let $\abb{L}{TM}{\real{}}$ be a Lorentz-Finsler metric and $\abb{\gamma}{[a,b]}{M}$ be a piecewise smooth curve.
The \textbf{energy} of $\gamma$ with respect to $L$ is defined as
$$E_L(\gamma):=\frac{1}{2}\integ{a}{b}{L(\gamma'(t))}{t}.$$
The curve $\gamma$ is a \textbf{geodesic} of $L$ if it is a critical point of $E_L$ under variations with fixed end-points.

This can also be expressed in terms of the  \textbf{Chern connection}.
Given a nowhere vanishing vector field $V$ there exists a unique torsion free connection $\abb{\nabla^V}{\Gamma(TM)\times\Gamma(TM)}{\Gamma(TM)}$ satisfying 
$$X(g_{V}(Y,Z))=g_V(\nabla^V_XY,Z)+g_V(Y,\nabla^V_XZ)+2T_V(\nabla^V_XV,Y,Z).$$
Here $T$ denotes the Cartan tensor of $L$ (see e.g.\cite[Chapter 2]{Javaloyes15} for details).
This gives rise to a covariant derivative $\abb{D_{\gamma}^W}{\mathfrak{X}(\gamma)}{\mathfrak{X}(\gamma)}$ defined along smooth regular curves, where $\mathfrak{X}(\gamma)$ denotes the space of vector fields along $\gamma$.
Then geodesics are determined by the equation $D_{\gamma}^{\gamma'}\gamma'=0$ which in local coordinates reads as 
$$(\gamma^k)''+\summe{i,j=1}{n}{(\gamma^i)'(\gamma^j)'\Gamma_{ij}^{k}(\gamma')}=0,$$
where $\Gamma_{ij}^k$ are defined as usual by
$$\nabla^V_{\partial_i}\partial_j=:\summe{k=1}{n}{\Gamma_{ij}^{k}(V)\partial_k}.$$

As shown for example in \cite{Minguzzi14} in a more general setting, for any $v\in TM$ there exists a unique geodesic $\gamma_v$ with $\gamma'(0)=v$.
For any $p\in M$ this leads to a smooth \textbf{geodesic exponential function} $\abb{\exp_p}{V}{M}$ defined on a starshaped subset $V$ around $0\in T_pM$ by $\exp_p(v):=\gamma_v(1)$.
As shown in \cite[Proposition 3.10]{Suhr18} there exists a fibre-wise star-shaped open subset $D\subset TM$ of $0\in T_pM$ such that the map $\abb{\mathrm{Exp}}{D}{M\times M}$ defined by $\mathrm{Exp}(v,w):=(\pi(v),\gamma_w(1))$ becomes a diffeomorphism onto its image smooth outside of the $0$-section.
After eventually diminishing $D$ to a smaller subset $\tilde{D}$, $\exp_p(\tilde{D}\cap T_pM)$ defines a geodesically convex neighbourhood around $p$, i.e., any two points in this neighbourhood can be joined by a unique geodesic contained in $\exp_p(\tilde{D}\cap T_pM)$.

Similar to the case of pseudo-Riemannian metrics there exists a \textbf{geodesic spray}, i.e., a vector field $G$ on $TM$ whose integral curves project to geodesics on $M$, see e.g. \cite{Javaloyes21}.

For strongly convex cone structures it follows that cone geodesics coincide up to re-parametrization with the null geodesics of any Lorentz-Finsler metric defining the cone structure, see \cite{Javaloyes20}.

\subsection{The space of cone geodesics}

Since the work of Penrose \cite{Penrose68, Penrose722} and Low \cite{Low89, Low06} it is well-known that in many cases the space of light rays of a Lorentzian spacetime admits the structure of a smooth manifold.
The same is true for strongly convex cone structures, see \cite{Hedickediss, Herrera25}.
In this section we briefly recall the construction of the smooth structure on the space of cone geodesics.

Let $\mathcal{N}_C$ be the set of all inextensible cone geodesics of a strongly convex cone structure $(M,C)$ up to re-parametrization.
Here we view two cone geodesics as the same element in $\mathcal{N}_C$ iff their images coincide.
A natural topology on $\mathcal{N}_C$ can be defined as follows.
As explained in the previous section, every cone geodesic is, up to re-parametrization, a null geodesic for an auxiliary Lorentz-Finsler metric $L$ and therefore satisfies the geodesic equation.
Hence, as for Lorentzian spacetimes, the image of a cone geodesic is uniquely determined by a line in $\partial C$ that is collinear to the derivative of the cone geodesic at one point.
Obviously two lines in $\partial C$ define the same cone geodesic iff they are related by the geodesic spray of $L$.
This implies that $\mathcal{N}_C$ can be identified with a quotient of $\partial C$:
Let $G$ be the geodesic spray of $L$ and $Y$ the vector field inducing the canonical $\R$-action on $TM$ given by fibre-wise scalar multiplication with a positive scalar.
An easy computation shows that $[Y,G]=G$, i.e. the span of $G$ and $Y$ induces a foliation of $\partial C$ by $2$-dimensional leaves.
A leave consists of the image of a line in $\partial C$ under the geodesic flow of $L$.
In particular $\mathcal{N}_C$ can be naturally identified with the leaf-space of this foliation.
We equip $\mathcal{N}_C$ with the quotient topology.

\begin{nota}
Elements of $\mathcal{N}_C$ are equivalence classes of cone geodesics sharing the same image.
We will usually denote them by $[\gamma]$, where $\gamma\in [\gamma]$ is a cone geodesic representing the equivalence class.
\end{nota}

The following result is well known in Lorentzian geometry \cite{Low89} and works analogously for strongly convex cone structures.

\begin{prop}
Let $(M,C)$ be a strongly causal strongly convex cone structure.
Then $\mathcal{N}_C$ has a (possibly non-Hausdorff) smooth structure, turning the quotient map $\partial C\rightarrow \mathcal{N}_C$ into a submersion.
\end{prop}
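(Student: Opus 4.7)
The plan is to identify $\mathcal{N}_C$ with the leaf space of the $2$-dimensional foliation $\mathcal F$ on $\partial C$ already isolated in the paragraphs preceding the proposition, to build charts from codimension-two transversal slices sitting over hypersurfaces in $M$, and to use strong causality precisely at the point where one needs each nearby leaf to meet such a slice in a unique point.

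First I would fix an auxiliary Lorentz--Finsler metric $L$ with $\partial C\subset L^{-1}(0)$ and consider on $\partial C$ the geodesic spray $G$ together with the radial vector field $Y$. Both are tangent to $\partial C$, everywhere linearly independent there, and satisfy $[Y,G]=G$, so Frobenius gives a smooth $2$-dimensional foliation $\mathcal F$ whose leaves are precisely the preimages under $\partial C\to\mathcal{N}_C$ of single cone geodesics. Thus $\mathcal{N}_C$ is the leaf space of $\mathcal F$, and $q\colon\partial C\to\mathcal{N}_C$ is the leaf projection.

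For a chart around $[\gamma_{v_0}]$ with $p_0:=\pi(v_0)$, I would choose a small smooth hypersurface $S\subset M$ through $p_0$ that is transverse to $C$. The set $\partial C|_S:=\pi^{-1}(S)\cap\partial C$ is then a smooth submanifold of $\partial C$, transverse to $G$ near $v_0$ and invariant under the $\R^+$-action generated by $Y$, so $\Sigma_S:=\partial C|_S/\R^+$ is a smooth manifold whose dimension matches that of $\mathcal{N}_C$. As a candidate chart I would take the map $\Sigma_S\to\mathcal{N}_C$, $[v]\mapsto[\gamma_v]$. To see that it is a local homeomorphism onto an open neighbourhood of $[\gamma_{v_0}]$, I would invoke strong causality: a causally convex refinement of a geodesically convex neighbourhood of $p_0$, supplied by the convex-neighbourhood theory of Section \ref{secFinsler}, has the property that any cone geodesic sufficiently close to $\gamma_{v_0}$ cannot leave and re-enter it, and therefore crosses $S$ at exactly one point.

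Transition functions between two such slice charts $\Sigma_S$ and $\Sigma_{S'}$ are given leaf by leaf by the geodesic flow of $L$ until a leaf meets $\partial C|_{S'}$, followed by the smooth $\R^+$-quotient; both operations are smooth off the zero section, so the charts fit into a smooth atlas. The submersion statement for $q$ is then immediate from the local product structure supplied by Frobenius, and the possible failure of Hausdorffness is a purely global phenomenon not obstructed by these local arguments. I expect the principal technical step to be the injectivity and openness statement for the chart $\Sigma_S\to\mathcal{N}_C$, where strong causality must be converted into the precise assertion that nearby cone geodesics intersect $S$ exactly once; the remaining smoothness claims then follow by Frobenius and smoothness of the Lorentz--Finsler geodesic flow.
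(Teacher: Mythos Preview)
The paper does not actually supply a proof of this proposition: it is stated as well known in the Lorentzian setting (citing Low) and asserted to carry over to strongly convex cone structures without further argument. Your proposal is a correct and essentially complete sketch of precisely that classical argument---leaf space of the $2$-dimensional foliation spanned by $G$ and $Y$, charts from codimension-two transversals over local hypersurfaces, strong causality to force uniqueness of intersection, and smoothness of transitions via the geodesic flow---so there is nothing to compare beyond noting that you have spelled out what the paper leaves implicit.
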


\begin{remark}
In the Lorentzian setting, the question weather $\mathcal{N}_C$ is Hausdorff has been explored for example in \cite{Low89, Low90, Hedicke20}.
Examples include globally hyperbolic spacetimes and causally simple spacetimes with an open conformal embedding into a globally hyperbolic spacetime.
Using the analogous notions for geodesics in Finsler spacetimes, see e.g. \cite{Javaloyes15}, one can similarly show that these results also hold in the context of strongly convex cone structures, see \cite{Hedickediss}.
\end{remark}

In this paper we will focus on globally hyperbolic cone structures.

\subsection{Contact manifolds}\label{SecCont}

In the case $\mathcal{N}_C$ is a smooth manifold, it naturally carries a contact structure.
In this section we briefly recall the basic notions of contact geometry.
For a more detailed treatment, see \cite{Geiges}.

\begin{definition}
A hyperplane distribution $\xi\subset TM$ on a manifold $N^{2n+1}$ is called a \textbf{contact structure}, if there exists a $1$-form $\alpha$ such that $\xi=\mathrm{ker}\alpha$ and $\alpha\wedge(d\alpha)^n$ is a volume form on $N$ ($\xi$ is maximally non integrable).
\end{definition}

In this case a $1$-form $\beta$ with $\xi=\mathrm{ker}\beta$ is called a \textbf{contact form} for $\xi$ and induces a coorientation of $\xi$.
Given a contact structure $\xi=\mathrm{ker}\alpha$, a $1$-form $\beta$ is a contact form for $\xi$ if and only if $\beta=f\alpha$ for some non-vanishing function $f$.

\begin{exmp}\label{exmpcotangent}
Let $\Sigma$ be a smooth manifold.
The cotangent bundle of $\Sigma$ admits a canonical Liouville $1$-form $\lambda$, defined at $v\in T^{\ast}\Sigma$ by $\lambda_v(w):=v(d\pi(w))$, where $\pi \colon T^{\ast}\Sigma\rightarrow \Sigma$ denotes the canonical projection map.
The kernel of $\lambda$ is invariant under the $\R$-action on $T^{\ast}\Sigma$ given by positive scalar multiplication and therefore descends to a well-defined hyperplane distribution $\xi$ on the \textbf{spherical cotangent bundle} $$ST^{\ast}\Sigma:=T^{\ast}\Sigma\setminus\{0\}/\R.$$
Note that $\lambda$ does not descend to a well-defined $1$-form on $ST^{\ast}\Sigma$.

One way to obtain a contact form defining $\xi$ is to identify $ST^{\ast}\Sigma$ with the unit cotangent bundle of a fixed Riemannian (or more generally Finsler) metric $g$ on $\Sigma$.
The Legendre transform of the unit disc bundle is given by $K:=\{g(w,\cdot)|\Vert w\Vert\leq 1\}$ and defines a fibre-wise convex domain with smooth boundary in $T^{\ast}\Sigma$.
The boundary is a section for the $\R$-action, hence the restriction of $\lambda$ to $\partial K$ defines a contact form on $ST^{\ast}\Sigma$, see \cite{Geiges}.
More generally, any contact form $\alpha$ (inducing the standard coorientation on $\xi$) can be obtained by restricting $\lambda$ to the boundary of a fibre-wise bounded and star shaped domain with smooth boundary $K_{\alpha}$ in $T^{\ast}\Sigma$ centred around the zero section, i.e., a domain whose boundary is transverse to the flow of the Liouville vector field $Y$ inducing the $\R$-action.
\end{exmp}

\begin{definition}
A \textbf{contactomorphism} of $(M,\xi)$ is a diffeomorphism $\phi\colon M\rightarrow M$ with $\phi_{\ast}\xi=\xi$.
The set of all contactomorphisms forms a group denoted by $\mathrm{Cont}(M,\xi)$.
\end{definition}

An important class of submanifolds in a contact manifolds are called Legendrians.

\begin{definition}\label{deflegpositive}
A \textbf{Legendrian} $L^n\subset (M^{2n+1},\xi)$ is an embedded $n$-dimensional submanifold with $TL\subset\xi$.
We say that two Legendrians $L_0,L_1$ are \textbf{Legendrian isotopic} if there exists a smooth isotopy $\rho\colon [0,1]\times L\rightarrow M$ such that $\rho_0(L)=L_0, \rho_1(L)=L_1$ and $\rho_t(L)$ is Legendrian for every $t\in [0,1]$.
The equivalence class of all Legendrians isotopic to a given Legendrian $L$ is denoted by $\mathrm{Leg}(L)$.
\end{definition}

Due to the maximal non integrability, the maximal dimension a submanifold tangent to $\xi$ can have is $n$.

\begin{exmp}
Consider the spherical cotangent bundle $\pi\colon ST^{\ast}\Sigma\rightarrow \Sigma$ of a smooth manifold $\Sigma$, equipped with the contact structure defined above.
Then each fibre $F_p:=\pi^{-1}(p)$ is a Legendrian sphere in $ST^{\ast}\Sigma$.
The fibres over two points $p_0,p_1\in \Sigma$ are Legendrian isotopic (one can e.g. lift a curve between $p_0$ and $p_1$ in $\Sigma$ to a Legendrian isotopy in $ST^{\ast}\Sigma$).
\end{exmp}

An interesting feature of the group of contactomorphisms is that it carries a natural invariant cone structure (infinite dimensional) and even a Lorentz-Finsler metric, see \cite{Eliashberg00, Abbondandolo22}.
The tangent space at the identity of $\mathrm{Cont}(M,\xi)$ consists of all \textbf{contact vector fields}, i.e., vector fields whose flow preserves the contact structure $\xi$.
Then the cone at the identity is given by all contact vector fields $X$ that are non-negatively transverse to $\xi$, i.e., such that $\alpha(X)\geq 0$ for some contact form inducing the chosen coorientation.

Following \cite{Eliashberg00} we define:

\begin{definition}
Let $(M,\xi=\ker\alpha)$ be a cooriented contact manifold.
A smooth path of contactomorphisms $(\phi_t)_{t\in[0,1]}$ is called \textbf{positive (non-negative)} if 
$$\alpha\left(\frac{d}{dt}|_{t=s}\phi_t(p)\right)>0\; (\geq 0)$$
for all $p\in M$ and $s\in [0,1]$.
Similarly, call a path of Legendrians $(L_t)_{t\in[0,1]}$ positive (non-negative) if there exists a Legendrian isotopy $\rho\colon [0,1]\times L\rightarrow M$ with $\rho_t(L)=L_t$ such that
$$\alpha\left(\frac{d}{dt}|_{t=s}\rho_t(x)\right)>0\; (\geq 0)$$
for all $x\in L$ and $s\in [0,1]$.
\end{definition}

\begin{remark}
Note that this definition does not depend on the particular choice of the contact form $\alpha$ but only on the induced coorientation of $\xi$.
\end{remark}

\begin{exmp}\label{exmpReeb}
Let $\alpha$ be a contact form.
It follows from the maximal non-integrability that there exists a unique vector field $R$, called the \textbf{Reeb vector field} of $\alpha$ such that $\alpha(R)=1$ and $d\alpha(R,\cdot)=0$.
A simple computation shows that its flow $R_t$ preserves the contact form $\alpha$ and is therefore by definition a positive path of contactomorphisms.
Given any Legendrian $L$, the path $L_t:=R_t(L)$ is positive.

In the case of the spherical cotangent bundle it is a well-known fact \cite{Geiges} that the Reeb flow of the contact form obtained by restricting the Liouville form to the unit cotangent bundle of a (Riemann) Finsler metric (see Example \ref{exmpcotangent}) coincides with the cogeodesic flow, i.e., with the Legendre transform of the geodesic flow, and the flow lines project to geodesics on the base manifold.
\end{exmp}

\subsection{The contact structure on the space of cone geodesics}

It was already observed by Roger Penrose in his early works on twistor theory and worked out in detail by Low, see e.g. \cite{Penrose722, Low06}, that the space of null geodesics admits a natural contact structure.
In this section we will review the construction of this contact structure in the setting of strongly convex cone structures.

First note that any cone structure $C$ induces a dual cone structure $C^{\ast}\subset T^{\ast}M$ by defining
$$C^{\ast}:=\{v\in T^{\ast}M|v(w)\geq 0 \forall w\in C\}.$$
A Lorentz-Finsler metric $L\colon TM\rightarrow \R$ defining $C$ induces a Legendre transform, i.e., a bundle isomorphism $\Phi_L\colon TM\rightarrow T^{\ast}M, w\mapsto g_w(w,\cdot)$ that maps $C$ to $C^{\ast}$.
The Legendre transform can be used to consider the push-forward of the geodesic spray of $L$ to $T^{\ast}M$, which induces a foliation of $\partial C^{\ast}$ by curves projecting to cone geodesics.
This allows to consider $\mathcal{N}_C$ as a quotient of $\partial C^{\ast}$.
Note that similar to the dual construction on $TM$ this identification does not depend on the choice of $L$.

\begin{prop}\label{propcs}
Let $(M,C)$ be a strongly causal strongly convex cone structure such that $\mathcal{N}_C$ is a smooth (Hausdorff) manifold and the projection map $\pi\colon \partial C^{\ast}\rightarrow \mathcal{N}_C$ a submersion.
Then there exists a canonical contact structure $\xi_C$ on $\mathcal{N}_C$ defined by $\xi_C= d\pi(\ker\lambda\cap T\partial C^{\ast})$.
Here $\lambda$ denotes the canonical Liouville $1$-form on $T^{\ast}M$.
The contact structure $\xi_C$ has a coorientation induced by $\lambda$.
\end{prop}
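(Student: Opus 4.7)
The plan is to carry out the construction in three steps: identify the fibres of $\pi$ via Hamiltonian data on $T^*M$; show that $\tilde\xi:=\ker\lambda\cap T\partial C^*$ contains these fibres and is invariant along them, so that $\xi_C=d\pi(\tilde\xi)$ is a well-defined cooriented hyperplane field; and verify the contact condition by computing the radical of $\omega:=d\lambda$ restricted to $\tilde\xi$.

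Fix an auxiliary Lorentz-Finsler metric $L$ defining $C$ and let $H\colon T^*M\to\R$ be the associated fibrewise degree-$2$ homogeneous Hamiltonian, so that $\partial C^*=H^{-1}(0)\setminus\{0\}$ and $dH\neq 0$ along it. Denote by $X_H$ the Hamiltonian vector field of $H$ (the Legendre dual of the geodesic spray $G$) and by $Y$ the Liouville vector field generating the fibrewise scaling. By the discussion in the previous subsection, the fibres of $\pi$ are the leaves of the rank-$2$ foliation of $\partial C^*$ spanned by $X_H$ and $Y$. Both vector fields are tangent to $\partial C^*$ (from $X_H(H)=0$ and the cone invariance), and both lie in $\ker\lambda$: indeed $\lambda(Y)=0$ since $Y$ is vertical, and Euler's formula gives $\lambda(X_H)=2H=0$ on $\partial C^*$. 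Hence $\ker d\pi\subseteq\tilde\xi$. Invariance of $\tilde\xi$ along the leaves follows from $\mathcal{L}_Y\lambda=\lambda$, which preserves $\ker\lambda$, together with the Cartan computation
\[
\mathcal{L}_{X_H}\lambda \;=\; d(\iota_{X_H}\lambda)+\iota_{X_H}\omega \;=\; d(2H)-dH \;=\; dH,
\]
which restricts to zero on $T\partial C^*$. Therefore $\tilde\xi$ descends to a hyperplane field $\xi_C$ on $\mathcal{N}_C$, and for any local section $\sigma$ of $\pi$ the $1$-form $\beta:=\sigma^*\lambda$ satisfies $\ker\beta=\xi_C$.

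The main computation is the radical of $\omega|_{\tilde\xi}$. The restriction $\omega|_{T\partial C^*}$ has $1$-dimensional kernel spanned by $X_H$, since $\partial C^*$ is a coisotropic hypersurface and $\iota_{X_H}\omega=-dH$ vanishes on $T\partial C^*$. Combining $\mathcal{L}_Y\lambda=\lambda$ with $\iota_Y\lambda=0$ yields the key identity $\iota_Y\omega=\lambda$, so that $\omega(Y,\cdot)|_{T\partial C^*}=\lambda|_{T\partial C^*}$. If $Z\in\tilde\xi$ satisfies $\omega(Z,W)=0$ for all $W\in\tilde\xi=\ker(\lambda|_{T\partial C^*})$, then $\omega(Z,\cdot)|_{T\partial C^*}$ is a multiple $c\,\lambda|_{T\partial C^*}$, whence $Z-cY\in\langle X_H\rangle$ and $Z\in\langle X_H,Y\rangle=\ker d\pi$. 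Conversely, both $X_H$ and $Y$ clearly lie in this radical. Therefore $\omega$ descends to a non-degenerate $2$-form on $\xi_C$, so $d\beta|_{\xi_C}$ is non-degenerate and $\beta\wedge(d\beta)^{n-1}$, with $n+1=\dim M$, is a local volume form on $\mathcal{N}_C$, proving that $\xi_C$ is a contact structure.

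For the coorientation, the nowhere zero $1$-form $\lambda|_{T\partial C^*}$ coorients $\tilde\xi$, and the computations above show that $\mathcal{L}_{X_H}\lambda|_{T\partial C^*}=0$ while $\mathcal{L}_Y\lambda=\lambda$ rescales the form by a positive factor, so this coorientation is preserved along the leaves and descends to $\xi_C$. Intrinsicness is built into the statement: $\lambda$, $\partial C^*$, and $\pi$ are defined from $C$ alone, the auxiliary $H$ serving only to identify the foliation directions $X_H$ and $Y$, which are intrinsic up to reparametrization. The most delicate step is the radical computation; once the identities $\iota_Y\omega=\lambda$ and $\ker\omega|_{T\partial C^*}=\langle X_H\rangle$ are available, the rest is linear algebra.
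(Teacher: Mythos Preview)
Your argument is correct. The identities $\lambda(X_H)=2H$, $\mathcal L_{X_H}\lambda=dH$, $\iota_Y\omega=\lambda$, and $\ker\omega|_{T\partial C^{\ast}}=\langle X_H\rangle$ are exactly what is needed, and your radical computation cleanly shows that $\omega|_{\tilde\xi}$ has null space precisely $\langle X_H,Y\rangle=\ker d\pi$, so that $d\beta|_{\xi_C}$ is non-degenerate for any local section $\sigma$.

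The paper organizes the same ingredients differently: it first quotients $\partial C^{\ast}$ by the cogeodesic flow alone to obtain an intermediate \emph{symplectic} manifold $(A,\Omega)$ (the space of parametrized null geodesics), observes that the Liouville vector field $Y$ descends to a Liouville vector field $Z$ on $A$ with primitive $\eta=\Omega(Z,\cdot)$, and then realizes $\mathcal N_C$ as a global section of the trivial principal $\R$-bundle $A\to\mathcal N_C$, invoking the standard fact that a hypersurface transverse to a Liouville vector field is contact. Your route bypasses the intermediate quotient entirely and works locally, computing the radical of $\omega|_{\tilde\xi}$ in one step; this is more elementary and avoids the appeal to a global section of $A\to\mathcal N_C$. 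The paper's two-step reduction, on the other hand, makes the symplectic geometry of the space of parametrized geodesics explicit and yields a global contact form once a section is fixed. Both approaches ultimately rest on the same two facts: $\ker\omega|_{T\partial C^{\ast}}=\langle X_H\rangle$ and $\iota_Y\omega=\lambda$.
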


\begin{proof}
We need to show the existence of a contact form whose kernel is given by $d\pi(\ker\lambda\cap T\partial C^{\ast})$.
Let $L$ be a Lorentz-Finsler metric defining $C$.

First note that the cogeodesic flow of $L$ on $\partial C^{\ast}$ preserves the canonical symplectic $2$-form $\omega=d\lambda$, i.e., $\omega$ induces a well-defined $2$-form $\Omega$ on the quotient $A:=\partial C^{\ast}/\sim$ of $\partial C^{\ast}$ by the cogeodesic flow.
By assumption $A$ is a smooth Hausdorff manifold that can be identified with the space of parametrized null geodesics of $L$.

The $2$-form $\Omega$ is symplectic:
By construction $\Omega$ is closed, i.e., it remains to show that $\Omega$ is non-degenerate.
Consider $[u]\in A$ and $[v]\in TA$ with $[v]\neq 0$.
The tangent vector $[v]$ can be identified with an equivalence class of tangent vectors in $T\partial C^{\ast}$ that are related by the cogeodesic flow.
To show that $\Omega$ is non-degenerate, we need to find a $[w]\in TA$ with $\Omega_{[u]}([v],[w])\neq 0$.
This is the case if and only if there exist representatives $v,w\in T_u\partial C^{\ast}$ with $\omega(v,w)\neq 0$.
In particular, it suffices to show that $\ker \omega|_{T_u\partial C^{\ast}}$ projects to $0$ in $T_{[u]}A$.
A standard fact from symplectic linear algebra implies that $\ker \omega|_{T_u\partial C^{\ast}}$ is one-dimensional ($T_u\partial C^{\ast}$ is a $2n-1$ dimensional subspace of $T_uT^{\ast}M^{n}$).
Let $X_L$ be the cogeodesic vector field.
Then for any $w\in T_u\partial C^{\ast}$ we have 
$$\omega_u(X_L(u),w)=-dH(w)=0,$$
where $H\colon T^{\ast}M\rightarrow\R$ with $H(g_w(w,\cdot))=L(w)$ denotes the Hamiltonian function induced by $L$ and we used that $\partial C^{\ast}$ is a connected component of $H^{-1}(0)$.
It follows that $\ker \omega|_{T_u\partial C^{\ast}}$ is spanned by $X_L(u)$ and projects to $0$ in $A$.

The canonical Liouville vector field $Y$, uniquely defined by $\omega(Y,\cdot)=\lambda$, induces the above $\R$-action on $T^{\ast}M$ and is tangent to $\partial C^{\ast}$.
An easy computation shows that the commutator of $Y$ and the cogeodesic vector field is co-linear to the geodesic vector field.
In particular, $Y$ induces a well-defined vector field $Z$ on $A$.

Let $\eta:=\Omega(Z,\cdot)$.
Similar to the above considerations one shows that $d\eta=\Omega$, i.e., that $\eta$ is a Liouville form for $\Omega$ and $Z$ a Liouville vector field.

Further note that $\mathcal{N}_C$ can be identified with the quotient of $A$ by the flow of $Z$.
As $Y$ and hence also $Z$ is a complete vector field, it induces the structure of a trivial principle $\R$-bundle $\pi\colon A\rightarrow \mathcal{N}_C$.
The manifold $\mathcal{N}_C$ is diffeomorphic to a section $S$ of that bundle.
Moreover, the vector field $Z$ is transverse to $S$.
By \cite[Lemma/Definition 1.4.5]{Geiges} the $1$-form $\alpha:=\eta|_{TS}$ is a contact form.
Hence it induces a contact form with the desired properties on $\mathcal{N}_C$.
\end{proof}

\begin{remark}
The contact form constructed above is not unique but depends on the choice of the section $S$.
In general there is no canonical contact form for the contact structure $\xi_C$ but only a canonical coorientation induced by the Liouville form on $T^{\ast}M$.
\end{remark}

A particularly important example is given by globally hyperbolic cone structures.

\begin{prop}\label{Propglobhypcont}
Let $(M,C)$ be a globally hyperbolic strongly convex cone structure with Cauchy hypersurface $\Sigma$.
Then $(\mathcal{N}_C,\xi_C)$ is a smooth contact manifold and there exists a canonical contactomorphism
$$\rho_{\Sigma}\colon \mathcal{N}_C\rightarrow ST^{\ast}\Sigma.$$
\end{prop}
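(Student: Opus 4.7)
The plan is to construct $\rho_{\Sigma}$ directly from the Cauchy property, identify $\partial C^{\ast}|_{\Sigma}$ with $T^{\ast}\Sigma\setminus\{0\}$ via restriction of covectors, and show that this identification intertwines the Liouville forms. That $\mathcal{N}_C$ is a smooth Hausdorff contact manifold then follows from Proposition~\ref{propcs}, since global hyperbolicity implies strong causality and, as noted in the preceding Remark, Hausdorffness of $\mathcal{N}_C$ in this setting.

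Fix an auxiliary Lorentz-Finsler metric $L$ defining $C$ with Legendre transform $\Phi_L$. Every inextensible cone geodesic $[\gamma]$ meets $\Sigma$ in a unique point $p$, and $v:=\Phi_L(\gamma'(0))\in\partial C_p^{\ast}$ is determined up to positive scaling. Set
$$\rho_{\Sigma}([\gamma]):=\bigl[\,v|_{T_p\Sigma}\,\bigr]\in ST^{\ast}\Sigma.$$
The restriction is nonzero: the kernel of $T_p^{\ast}M\to T_p^{\ast}\Sigma$ is spanned by $dt$, which lies in $(C^{\ast})^{\circ}$ because $t$ is a Cauchy time function, so no element of $\partial C^{\ast}$ can be a positive multiple of $dt$. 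Independence of the auxiliary choice of $L$ follows because the image of $\gamma$ in $\partial C$, and hence in $\partial C^{\ast}$ up to a positive scalar, is intrinsic to the cone structure. The key step is then to establish the following convex-geometric claim: the restriction map $\Psi\colon \partial C^{\ast}|_{\Sigma}\to T^{\ast}\Sigma\setminus\{0\}$, $v\mapsto v|_{T_p\Sigma}$, is a diffeomorphism. For any lift $\tilde w\in T_p^{\ast}M$ of $w\in T_p^{\ast}\Sigma\setminus\{0\}$, the affine line $\{\tilde w+s\,dt\}_{s\in\R}$ meets $\partial C^{\ast}$ at a unique $s_0$: since $dt\in(C^{\ast})^{\circ}$ the line enters the convex cone $C^{\ast}$ at a unique minimal parameter, and strong convexity of $\partial C^{\ast}$ ensures transversality there. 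Together with the Cauchy property (each cogeodesic orbit meets $\partial C^{\ast}|_{\Sigma}$ in exactly one point, transversely since null directions are transverse to $\Sigma$), this exhibits $\partial C^{\ast}|_{\Sigma}$ as a Liouville-invariant global transverse section for the cogeodesic flow on $\partial C^{\ast}$, so $\rho_{\Sigma}$ corresponds to the induced diffeomorphism $\partial C^{\ast}|_{\Sigma}/\R_+\cong T^{\ast}\Sigma\setminus\{0\}/\R_+$.

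The contactomorphism property then follows from the identity $\Psi^{\ast}\lambda_{\Sigma}=\lambda|_{\partial C^{\ast}|_{\Sigma}}$: for $X\in T_v(\partial C^{\ast}|_{\Sigma})$ one has $d\pi_M X\in T_p\Sigma$ since $v$ is based in $\Sigma$, whence $\lambda_v(X)=v(d\pi_M X)=(v|_{T_p\Sigma})(d\pi_M X)=(\lambda_{\Sigma})_{\Psi(v)}(d\Psi\, X)$. Choosing a section $S$ of the Liouville action inside $\partial C^{\ast}|_{\Sigma}$, the contact form on $\mathcal{N}_C$ produced in the proof of Proposition~\ref{propcs} restricts to $\lambda|_S$, which under $\Psi$ becomes $\lambda_{\Sigma}|_{\Psi(S)}$, precisely the type of contact form on $ST^{\ast}\Sigma$ described in Example~\ref{exmpcotangent}. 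The main obstacle is the convex-geometric bijectivity of $\Psi$: it hinges on the Cauchy condition $dt\in(C^{\ast})^{\circ}$ combined with the convexity and smoothness of $\partial C^{\ast}$, and once it is in place the contactomorphism property reduces to the pullback computation above.
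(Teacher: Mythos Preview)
Your proof is correct and follows essentially the same route as the paper: both identify $\mathcal{N}_C$ with $\partial C^{\ast}|_{\Sigma}/\R$ via the Cauchy property, define the contactomorphism by restricting covectors $[v]\mapsto[v|_{T\Sigma}]$, and deduce the contact condition from the fact that the Liouville form on $T^{\ast}\Sigma$ is the restriction of that on $T^{\ast}M$. You supply considerably more detail than the paper on why the restriction map $\Psi$ is a diffeomorphism (the paper simply asserts ``one easily checks''), and you outsource Hausdorffness to the preceding Remark whereas the paper reproves it from the existence of the global section $\partial C^{\ast}|_{\Sigma}$; neither difference is substantive.
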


\textbf{Warning:} The contactomorphism $\rho_{\Sigma}$ reverses the coorientation, i.e., the natural coorientation of $\xi_C$ is mapped to the coorientation of $\xi$ induced by $-\lambda$. 
The reason is our convention for the definition of $C^{\ast}$ that we chose in order to relate causality to positivity in the space of cone geodesics in Proposition \ref{proppositivecausal} below.

\begin{proof}
Let $L$ be a Lorentz-Finsler metric defining $C$.
By definition $\partial C^{\ast}|_{\Sigma}$ is a section of the cogeodesic flow of $L$ on $\partial C^{\ast}$.
In particular, $\partial C^{\ast}|_{\Sigma}$ is diffeomorphic to the space $A$ defined in the proof of Proposition \ref{propcs}.
Note that the canonical Liouville vector field $Y$ on $T^{\ast}M$ is tangent to  $\partial C^{\ast}|_{\Sigma}$ and induces an $\R$-action.
The space of cone geodesics $\mathcal{N}_C$ can be identified with the quotient $\partial C^{\ast}|_{\Sigma}/\R$ that induces the above topology.
The existence of the section $\partial C^{\ast}|_{\Sigma}$ implies that this topology is Hausdorff.
It is easy to check that the quotient map $\partial C^{\ast}\rightarrow \partial C^{\ast}|_{\Sigma}/\R$ is indeed a submersion and hence induces the canonical smooth structure on $\mathcal{N}_C$.

It remains to show that there exists a diffeomorphism from $\partial C^{\ast}|_{\Sigma}/\R$ to $ST^{\ast}\Sigma$ mapping the projection of the kernel of the canonical Liouville form $\lambda$ on $T^{\ast}M$ to the standard contact structure on $ST^{\ast}\Sigma$.

The diffeomorphism is given by the map

\begin{align*}
\rho\colon \partial C^{\ast}|_{\Sigma}/\R&\rightarrow ST^{\ast}\Sigma\cong T^{\ast}\Sigma\setminus\{0\}/\R \\
[v]&\mapsto \left[v|_{T\Sigma}\right].
\end{align*}

The map $\rho$ is well-defined since the $\R$-actions on $ C^{\ast}|_{\Sigma}$ and $ T^{\ast}\Sigma\setminus\{0\}$ are induced by scalar multiplication.
One easily checks that $\rho$ is a diffeomorphism.
Since the canonical Liouville form on $T^{\ast}\Sigma$ is the restriction of the Liouville form on $T^{\ast}M$, the contact structure $d\pi(\ker\lambda\cap T\partial C^{\ast}|_{\Sigma})$ is mapped to the standard contact structure on $ST^{\ast}\Sigma$.
\end{proof}

\begin{remark}
A more explicit contactomorphism can be constructed as follows.
Let $L$ be a Lorents-Finsler metric inducing $C$ and $N$ a unit normal vector field to the Cauchy surface $\Sigma$, i.e., $L(N)=1$ and $g_w(w,N)=0$ for all $w\in T\Sigma$.
Then the contactomorphism $\rho$ is defined by $\rho([\gamma])=g_{\gamma'(t_0)}(\gamma'(t_0),\cdot)|_{T\Sigma}$, where $\gamma$ is parametrized such that $\gamma(t_0)\in\Sigma$ and $g_N(N,\gamma'(t_0))=1$.
\end{remark}

There are strong connections between the causality of a cone structure $(M,C)$ and the contact geometry of $(\mathcal{N}_C,\xi_C)$.
Let $p\in M$.
The \textbf{sky} of $p$ is defined as
$$S(p):=\{[\gamma]\in\mathcal{N}_C|\gamma \cap \{p\}\neq\emptyset\}$$
which is the set of all cone geodesics through the point $p$.

\begin{prop}\label{proppositivecausal}
Let $(M,C)$ be a strongly causal strongly convex cone structure such that $(\mathcal{N}_C,\xi_C)$ is a smooth contact manifold.
Then for any $p\in M$ the sky $S(p)$ is a Legendrian sphere in $\mathcal{N}_C$.
All skies are in the same Legendrian isotopy class $\mathcal{S}_C$.
If moreover $\gamma\colon I\rightarrow M$ is a timelike/ causal curve.
Then $(S(\gamma(t))_{t\in I}$ is a positive/ non-negative path of Legendrians.
\end{prop}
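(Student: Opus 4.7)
The plan is to identify the sky $S(p)$ with the quotient of the fibre $\partial C^{\ast}|_p$ by the restricted Liouville action, and then read all four assertions off from that identification.

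For the sphere and Legendrian claims, I would first verify that $\pi_{\N}\colon\partial C^{\ast}\to\N_C$ restricted to $\partial C^{\ast}|_p$ descends to a diffeomorphism $\partial C^{\ast}|_p/\R^{+}\to S(p)$. Surjectivity is immediate from the definition of the sky, while injectivity uses strong causality: a cone geodesic through $p$ cannot return to $p$, so two elements of $\partial C^{\ast}|_p$ lying in the same cogeodesic orbit must already be related by the Liouville scaling. Since $C^{\ast}_p$ is a proper convex cone in $T^{\ast}_pM\cong\R^n$, this quotient is diffeomorphic to $S^{n-2}$. A dimension count gives $\dim S(p)=n-2=\tfrac{1}{2}(\dim\N_C-1)$, matching the Legendrian dimension. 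For tangency to $\xi_C$, I would note that $T_v\partial C^{\ast}|_p$ sits inside the vertical subspace $T_v(T^{\ast}_pM)$, so $d\pi_M$ and hence $\lambda$ vanish on it; pushing forward yields $TS(p)\subset d\pi_{\N}(\ker\lambda\cap T\partial C^{\ast})=\xi_C$.

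The Legendrian isotopy statement then follows by letting $p$ vary smoothly: the sub-bundle $(\partial C^{\ast}|_p)_{p\in M}\subset\partial C^{\ast}$ is smooth, and projecting produces a smooth family of Legendrian spheres in $\N_C$. Connecting any two basepoints by a smooth curve in $M$ yields a Legendrian isotopy between the corresponding skies, so all skies share a single isotopy class $\mathcal{S}_C$.

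For the positivity assertion, given a timelike or causal curve $\gamma\colon I\to M$, I would construct an explicit Legendrian isotopy as follows. Choose any smooth map $h\colon I\times S^{n-2}\to\partial C^{\ast}$ with $\pi_M\circ h(t,\cdot)\equiv\gamma(t)$ such that for each fixed $t$ the composition $S^{n-2}\to S(\gamma(t))$ is a diffeomorphism; then $\rho_t:=\pi_{\N}\circ h(t,\cdot)$ is a Legendrian isotopy realising $(S(\gamma(t)))_{t\in I}$. The velocity $\partial_t h(t,x)\in T_{h(t,x)}\partial C^{\ast}$ projects under $d\pi_M$ to $\gamma'(t)$, so $\lambda(\partial_t h)=h(t,x)(\gamma'(t))$. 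Since $h(t,x)\in\partial C^{\ast}_{\gamma(t)}\setminus\{0\}$, the defining inequality for the dual cone yields $h(t,x)(\gamma'(t))\geq 0$ when $\gamma'(t)\in C_{\gamma(t)}$, with strict positivity when $\gamma'(t)\in C^{\circ}_{\gamma(t)}$ by the standard supporting hyperplane argument for proper convex cones.

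The step I expect to be most delicate is translating this sign of $\lambda$ on lifts in $\partial C^{\ast}$ into the sign of a contact form $\alpha$ on $\N_C$. Here I would use that $\pi_A^{\ast}\eta=\lambda|_{\partial C^{\ast}}$, together with the identity $\mathcal{L}_Z\eta=\eta$ (which follows from $d\eta=\Omega$ and $\eta=i_Z\Omega$), so flowing a point of $A$ to the section $S\cong\N_C$ that defines $\alpha$ rescales $\eta$ by a positive factor. Consequently $\alpha(\dot\rho_t(x))$ differs from $\lambda(\partial_t h(t,x))$ by a positive function, which preserves signs. This is in accordance with the assertion of Proposition \ref{propcs} that the coorientation of $\xi_C$ is the one induced by $\lambda$, and concludes the positivity or non-negativity claim depending on whether $\gamma$ is timelike or merely causal.
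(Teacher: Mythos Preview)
Your proposal is correct and, for the positivity/non-negativity assertion, is essentially identical to the paper's argument: both lift the isotopy to $\partial C^{\ast}$, compute $\lambda_{v_t}(\partial_t v_t)=v_t(\gamma'(t))$, and invoke the definition of the dual cone. Your final paragraph, justifying why the sign of $\lambda$ on the lift agrees with the sign of $\alpha$ on $\N_C$ via $\pi_A^{\ast}\eta=\lambda|_{\partial C^{\ast}}$ and $\mathcal{L}_Z\eta=\eta$, is in fact more careful than the paper, which simply asserts that the coorientation is the one induced by $\lambda$.

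The one place where you genuinely diverge is the proof that $S(p)$ is a Legendrian sphere. The paper takes a causally convex globally hyperbolic neighbourhood $U$ of $p$ with Cauchy hypersurface $\Sigma\ni p$, applies Proposition~\ref{Propglobhypcont} to identify the local space of cone geodesics with $ST^{\ast}\Sigma$, and then observes that $S(p)$ corresponds to the fibre $F_p$, which is already known to be a Legendrian sphere. You instead identify $S(p)$ directly with $\partial C^{\ast}|_p/\R^{+}$ and verify the sphere, dimension, and tangency conditions by hand. Your route is more elementary and self-contained (it does not invoke the globally hyperbolic model), at the cost of having to check by hand details---transversality of the cogeodesic field to the fibre, the embedding property, the vertical tangency $T_v(\partial C^{\ast}|_p)\subset\ker d\pi_M\subset\ker\lambda$---that the paper gets for free from the known structure of $ST^{\ast}\Sigma$. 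Both arguments are valid; the paper's is quicker given the machinery already in place, while yours makes the Legendrian condition more transparent.
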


\begin{proof}
Let $p\in M$ and $U$ be a causally convex globally hyperbolic neighbourhood of $p$ with Cauchy surface $\Sigma\cong\R^n$ and $p\in\Sigma$.
Then the space of cone geodesics of $U$ is an open neighbourhood of $S(p)$ in $\mathcal{N}_C$.
By Proposition \ref{Propglobhypcont} this neighbourhood is contactomorphic to $ST^{\ast}\Sigma$.
Note that this contactomorphism maps $S(p)$ to the fibre $F_p\subset ST^{\ast}\Sigma$ over $p$.
Since $F_p$ is a Legendrian sphere, the same holds for $S(p)$.

Given two points $p,q\in M$ and a path $\gamma\colon I\rightarrow M$ between them, $S(\gamma(t))$ defines a Legendrian isotopy between $S(p)$ and $S(q)$.
Suppose that $\gamma(t)$ is timelike.
Let $\iota_t\colon S^n\rightarrow S(\gamma(t))$ be a parametrization of the isotopy.
We need to show that for every $x\in S^n$ the curve $\iota_t(x)\colon I\rightarrow \mathcal{N}_C$ is positively transverse to $\xi_C$ with respect to the coorientation induced by the canonical Liouville form $\lambda$ on $T^{\ast}M$.
As before we identify $\mathcal{N}_C$ with $\partial C^{\ast}/\sim$.
Fix $t_0\in I$ and $x\in S^n$.
For arbitrary $t\in I$ we have $\iota_{t}(x)=[v_t]$ for some $v_t\in \partial C^{\ast}$ with $\pi(v_t)=\gamma(t)$, where $\pi\colon T^{\ast}M\rightarrow M$ is the canonical projection.
Note that by the choice of coorientation induced by $\lambda$, the isotopy is positively transverse to $\xi_C$ at $\iota_{t_0}(x)$ if and only if $\lambda_{v_{t_0}}\left(\frac{d}{dt}|_{t=t_0}v_t \right)>0$.
But 
$$\lambda_{v_{t_0}}\left(\frac{d}{dt}|_{t=t_0}v_t \right)=v_{t_0}\left(d\pi\left(\frac{d}{dt}|_{t=t_0}v_t\right)\right)=v_{t_0}(\gamma'(t_0))>0.$$
Here we used that $\gamma'(t_0)\in \mathrm{int}(C)$ and $v_{t_0}\in \partial C^{\ast}$.
Analogously one shows that for causal curves $S(\gamma(t))$ is non-negative.
\end{proof}

\section{Positive paths and cone structures}

The aim of this section is to establish a correspondence between certain globally hyperbolic cone structures equipped with a Cauchy time function, and positive paths of contactomorphisms.

In Section 3.1 we will show that any strongly convex globally hyperbolic cone structure gives rise to a positive path of contactomorphisms in the spherical cotangent bundle of its Cauchy hypersurfaces.
This proves Theorem \ref{thm1}

In Section 3.2 we will see that positive paths of contactomorphisms on spherical cotangent bundles can be used to define possibly non-strongly convex cone structures with the structure of a locally Lipschitz Lorentz-Finsler space.
In this case the path of contactomorphisms provides a natural analogue of null geodesic flows for such cone structures.
This proves Theorem \ref{thm2}.

\subsection{From cone structures to positive paths}

Let $(M,C)$ be a globally hyperbolic strongly convex cone structure with smooth Cauchy time function $\tau\colon M\rightarrow \R$.
Let $Y$ be a future pointing timelike vector field with $d\tau(Y)=1$.
Note that this condition implies that $Y$ is complete since $\tau$ is a Cauchy time function, i.e., $\tau$ is unbounded along any causal curve.
We get a diffeomorphism $\R\times \Sigma\rightarrow M$, $(t,p)\mapsto f_t^Y(p)$, where $f_t^Y$ denotes the flow of $Y$.
The diffeomorphism pulls back $\tau$ to the $\R$-coordinate $t$ and $Y$ to $\partial_t$.
For simplicity, we will without loss of generality from now on assume that $M=\R\times\Sigma$, $\tau=t$ and $Y=\partial_t$.

\begin{nota}
We will denote $\Sigma=\Sigma_0=\{0\}\times\Sigma$ and $\Sigma_t=\{t\}\times\Sigma$.
Further recall that $(ST^{\ast}\Sigma,\xi)$ denotes the spherical cotangent bundle with its canonical contact structure introduced in Example \ref{exmpcotangent} and $F_p$ the fibre over the point $p\in \Sigma$.
We consider the standard coorientation of $\xi$ induced by the canonical Liouville form $\lambda$.
As before we denote the sky of a point $(t,p)\in M=\R\times\Sigma$ by $S(t,p)$.
\end{nota}

Proposition \ref{Propglobhypcont} implies that for any $t\in\R$ there exists a contactomorphism
\begin{align*}
\abb{\rho_t}{\mathcal{N}_C}{ST^{\ast}\Sigma_t}
\end{align*}

\begin{theorem}\label{thmpospath}
Let $(M,C)=(\real{}\times \Sigma, C)$ be strongly convex and globally hyperbolic.
Then the family 
\begin{align*}
\abb{\phi_t}{ST^{\ast}\Sigma}{ST^{\ast}\Sigma}\:; \quad \phi_t:=\rho_t\circ\rho_0^{-1}
\end{align*}
defines a positive path of contactomorphisms.
Here we use the canonical identification of $ST^{\ast}\Sigma=ST^{\ast}\Sigma_0$ and $ST^{\ast}\Sigma_t$ induced by the splitting $M\cong \R\times \Sigma$.
Moreover, given $v\in ST^{\ast}\Sigma$ the curve
$$\gamma_v(t):=(t,\pi(\phi_t(v)))$$
is a cone geodesics of $C$.
Here $\pi\colon ST^{\ast}\Sigma\rightarrow\Sigma$ denotes the canonical projection.
\end{theorem}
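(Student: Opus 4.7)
I would split the proof into three parts: (a) $(\phi_t)$ is a smooth path of contactomorphisms with $\phi_0=\mathrm{id}$, (b) $\gamma_v$ is a cone geodesic, and (c) the path is positive. Parts (a) and (b) are essentially bookkeeping, while (c) carries the substantive computation.

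For (a), each $\rho_t$ is a contactomorphism by Proposition \ref{Propglobhypcont}, so every $\phi_t=\rho_t\circ\rho_0^{-1}$ is a contactomorphism and $\phi_0=\mathrm{id}$. Smoothness in $t$ follows by assembling the family into a single smooth map $\Psi\colon\R\times\partial C^{\ast}|_{\Sigma_0}\to\partial C^{\ast}$ obtained by flowing along the cogeodesic vector field until reaching $\partial C^{\ast}|_{\Sigma_t}$ and then quotienting by the $\R$-action. This is well-defined and smooth since $\{\Sigma_t\}$ is a smooth Cauchy foliation and $\partial C^{\ast}|_{\Sigma_t}$ is everywhere transverse to the cogeodesic flow. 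For (b), the explicit description of $\rho_t$ in the proof of Proposition \ref{Propglobhypcont} shows that $\phi_t(v)=\rho_t([\gamma])$, with $[\gamma]=\rho_0^{-1}(v)$, is represented by a cotangent vector based at the unique intersection point of $\gamma$ with $\Sigma_t$. Its $\Sigma$-coordinate is therefore $\pi(\phi_t(v))$, so $\gamma_v(t)=(t,\pi(\phi_t(v)))$ is a reparametrization of $\gamma$.

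The heart of the argument is (c). Fix $t_0\in\R$ and $v\in ST^{\ast}\Sigma$, and pick a local smooth lift $w(t)\in\partial C^{\ast}_{\gamma_v(t)}$ of $\phi_t(v)$ (available because $\partial C^{\ast}|_{\Sigma_t}\to ST^{\ast}\Sigma_t$ is a principal $\R$-bundle). Set $\tilde v(t):=w(t)|_{T\Sigma_t}\in T^{\ast}\Sigma\setminus\{0\}$, using the splitting-induced identification $T\Sigma_t\cong T\Sigma$. By the warning after Proposition \ref{Propglobhypcont}, the coorientation on $ST^{\ast}\Sigma$ transported from $\xi_C$ via $\rho_0$ is the one induced by $-\lambda$, and a direct check shows that rescaling $\tilde v$ by a positive function multiplies $\lambda(\dot{\tilde v})$ by a positive factor; so positivity of $(\phi_t)$ at $t_0$ is equivalent to the strict inequality $\lambda_{\tilde v(t_0)}(\dot{\tilde v}(t_0))<0$. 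Unwinding the Liouville form gives
\[
\lambda_{\tilde v(t_0)}(\dot{\tilde v}(t_0))=\tilde v(t_0)\bigl(\tfrac{d}{dt}\pi(\tilde v(t))\big|_{t=t_0}\bigr)=w(t_0)(\gamma_v^{\mathrm{sp}}(t_0)),
\]
where $\gamma_v^{\mathrm{sp}}(t_0):=\gamma_v'(t_0)-\partial_t\in T\Sigma_{t_0}$ is the spatial part of the future-pointing null vector $\gamma_v'(t_0)$. Consequently $\lambda_{\tilde v(t_0)}(\dot{\tilde v}(t_0))=w(t_0)(\gamma_v'(t_0))-w(t_0)(\partial_t)$. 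The first term vanishes because $w(t_0)\in\partial C^{\ast}$ represents the cone geodesic $[\gamma]$ at $\gamma_v(t_0)$; equivalently, $w(t_0)$ is the Legendre image under a defining Lorentz-Finsler metric of a null vector collinear with $\gamma_v'(t_0)$, which it annihilates by $2$-homogeneity. The second term is strictly positive because $\partial_t\in\mathrm{int}(C)$ and $w(t_0)\in C^{\ast}\setminus\{0\}$. This gives $\lambda_{\tilde v(t_0)}(\dot{\tilde v}(t_0))<0$, as required.

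The main obstacle is bookkeeping: threading the identifications among $\partial C^{\ast}$, $\partial C^{\ast}|_{\Sigma_t}$, $ST^{\ast}\Sigma_t$, and $ST^{\ast}\Sigma$ without losing track of signs, and correctly handling the coorientation reversal flagged after Proposition \ref{Propglobhypcont}. The only substantive geometric input beyond the earlier propositions is the pointwise vanishing $w(t_0)(\gamma_v'(t_0))=0$, which is a direct consequence of the strong convexity of $C$ and the $2$-homogeneity of any defining Lorentz-Finsler metric.
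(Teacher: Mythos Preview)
Your proof is correct. Parts (a) and (b) match the paper essentially verbatim (the paper spends one line on each), and you add the smoothness discussion, which the paper omits.

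For positivity the paper takes a different route: it observes that $\phi_t^{-1}(F_p)=\rho_0(S(t,p))$, invokes Proposition \ref{proppositivecausal} applied to the timelike curve $t\mapsto(t,p)$ to see that $t\mapsto S(t,p)$ is a positive Legendrian isotopy in $\mathcal{N}_C$, then uses that $\rho_0$ reverses coorientation to conclude that $\phi_t^{-1}(F_p)$ is negative in $ST^{\ast}\Sigma$, hence $(\phi_t)$ is positive (via equation (\ref{eq1})). Your argument bypasses skies and the inverse path altogether, instead lifting $\phi_t(v)$ to $\partial C^{\ast}$ and evaluating the Liouville form directly; the key step $w(t_0)(\gamma_v'(t_0))=0$ (by nullity and $2$-homogeneity) together with $w(t_0)(\partial_t)>0$ is correct. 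In effect you are inlining and adapting the computation that proves Proposition \ref{proppositivecausal}, rather than quoting it. The paper's version is shorter because it leverages that earlier proposition; yours is more self-contained and makes transparent exactly where the timelikeness of $\partial_t$ enters.
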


\begin{proof}
By definition $\phi_t$ is a contactomorphism.
The path $(\phi_t)_{t\in\R}$ is positive iff the curve $\phi_t(v)$ is positively transverse to $\xi$ for every $v\in ST^{\ast}\Sigma$.
Further note that the path $(\phi_t)_{t\in\R}$ is positive iff the inverse path $(\phi^{-1}_t)_{t\in\R}$ is negatively transverse to $\xi$, see equation (\ref{eq1}) below.

Given $v$ there exists $p\in \Sigma$ with $v\in F_p$.
Then $\phi^{-1}_t(F_p)=\rho_0(S(t,p))$.
Proposition \ref{proppositivecausal} implies that the Legendrian isotopy $t\mapsto S(t,p)$ is positive.
Since $\rho_0$ reverses the chosen coorientations, $\phi^{-1}_t(F_p)$ is a negative Legendrian isotopy and in particular negatively transverse to $\xi$ at $v$.

Let $[\eta_v]= \rho_0^{-1}(v)$ be the class in $\mathcal{N}_C$ identified with $v$ and $\eta_v$ a cone geodesic representing this class.
Without loss of generality we can assume that $\eta_v$ is parametrized such that $\eta_v(t)\in\Sigma_t$.
By definition of the contactomorphism $\rho_t$, the intersection of $\eta_v$ with $\Sigma_t$ is given by $(t,\pi(\rho_t([\eta_v])))=\gamma_v(t)$.
Hence $\eta_v=\gamma_v$.
\end{proof}

Recall that a (Riemann) \textbf{Finsler metric} on a manifold $\Sigma$ is a $1$-homogeneous function $F\colon T\Sigma\rightarrow\R$ such that $F^2$ satisfies all conditions of Definition \ref{deffinsler}, except that the fundamental tensor of $F^2$ is required to be positive definite, see e.g. \cite{Bao12} for further details.
Given $M\cong\R\times\Sigma$ and $C$ as above one can show.

\begin{lemma}\label{LemtrivialFinsler}
There exists a unique family of Finsler metrics $F_t$ on $\Sigma$ such that the (continuous) Lorentz-Finsler metric $L:=dt^2-F^2_t$ defines $C$, i.e., such that $\partial C=L^{-1}(0)\cap \{dt>0\}$.
\end{lemma}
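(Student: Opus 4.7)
The plan is to define $F_t$ by exhibiting $\partial C$ as a graph over the spatial tangent bundle. For each $(t,x) \in \R \times \Sigma$ and each $v \in T_x\Sigma \setminus \{0\}$, set $F_t(x,v)$ to be the unique scalar $s > 0$ such that $(s,v) \in \partial C_{(t,x)} \subset \R \oplus T_x\Sigma$. Existence and uniqueness of this $s$ follow from three observations: $\partial_t \in C_{(t,x)}^{\circ}$ by the choice of $Y$, so $(s,v)$ lies in $C^{\circ}_{(t,x)}$ for all sufficiently large $s$; the Cauchy hypersurface $\Sigma_t$ is transverse to $C$, so $(0,v) \notin C_{(t,x)}$ and hence $(s,v) \notin C_{(t,x)}$ for small $s > 0$; and convexity of $C_{(t,x)}$ forces the ray $\{(s,v) : s > 0\}$ to cross $\partial C_{(t,x)}$ at exactly one point. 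Positive $1$-homogeneity of $F_t(x,\cdot)$ is immediate from property (i) of Definition \ref{defcone}. Smoothness of $F_t$ on $T\Sigma \setminus \{0\}$, jointly in $t$, follows from the implicit function theorem: $\partial C$ is a smooth embedded hypersurface of $TM\setminus\{0\}$ transverse to fibres, and the ray $s \mapsto (s,v)$ is transverse to $\partial C$ at $s = F_t(x,v)$ because $\partial_t$ is timelike.

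With $F_t$ in hand, a future-pointing vector $(\tau, v)$ at $(t,x)$ satisfies $L(\tau,v) = \tau^2 - F_t(x,v)^2 = 0$ if and only if $\tau = F_t(x,v)$, which by construction is exactly the defining equation of $\partial C$. For uniqueness, any family $\tilde F_t$ of $1$-homogeneous positive functions with $\partial C = \{dt^2 - \tilde F_t^2 = 0\} \cap \{dt > 0\}$ must satisfy $(\tilde F_t(x,v), v) \in \partial C_{(t,x)}$ for every $v \neq 0$, so by the uniqueness of $s$ above, $\tilde F_t = F_t$.

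The remaining and only delicate step is to verify that the fundamental tensor of $F_t^2$ is positive definite at every $v \neq 0$, so that $F_t$ is genuinely a Finsler metric in the sense recalled before the lemma. The translation goes through the strong convexity condition (iii). The hypersurface $\partial C_{(t,x)}$ is the graph of $F_t(x,\cdot)$, with $-\partial_\tau$ playing the role of the outward direction; at a point $(F_t(x,v), v) \in \partial C_{(t,x)}$ the tangent space splits into the radial direction (spanned by $(F_t(x,v), v)$ itself) and a complementary $(\dim \Sigma - 1)$-dimensional horizontal subspace, and the second fundamental form restricted to the horizontal subspace equals, up to a positive factor, the Hessian of $F_t(x,\cdot)$ restricted to directions transverse to $v$. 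Condition (iii) then says exactly that this Hessian is positive definite on the transverse directions and degenerate only along the radial direction. Combining this with the Euler relations for the $2$-homogeneous function $F_t^2/2$, namely $\mathrm{Hess}(F_t^2/2)(v,\cdot) = F_t(x,v)\, d_vF_t(\cdot)$ and $\mathrm{Hess}(F_t^2/2)(v,v) = F_t(x,v)^2$, upgrades the semi-definiteness with radial kernel of $\mathrm{Hess}(F_t)$ to genuine positive definiteness of $\mathrm{Hess}(F_t^2/2)$, which is the fundamental tensor of $F_t^2$. This identification of the geometric convexity of $\partial C$ with the analytic positivity of the fundamental tensor is where the main bookkeeping lies; the rest of the lemma is a direct unpacking of the graph construction.
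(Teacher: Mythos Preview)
Your argument is correct and follows essentially the same construction as the paper: the paper intersects $C$ with the affine hyperplane $\{dt=1\}$, translates by $-\partial_t$, and recognises the resulting set as the unit disc bundle of a Finsler metric $F_t$, which is exactly the inverse description of your graph map $v\mapsto (F_t(x,v),v)$. The only difference is that the paper defers the verification that strong convexity of the slice yields a genuine Finsler metric to \cite[Theorem 2.17]{Javaloyes20}, whereas you carry out this step directly via the second fundamental form and the Euler relations; your version is therefore more self-contained but not a different approach.
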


\begin{proof}
The Lemma can be proved similarly to \cite[Theorem 2.17]{Javaloyes20}.
As pointed out in this reference, the set $((\{dt=1\}\cap C)-\partial_t)\subset T\Sigma_t$ is a compact, strongly convex neighbourhood of the zero-section, and therefore the unit disc bundle of a unique Finsler metric $F_t$ on $\Sigma_t$.
Clearly $v\in\partial C$ iff $L(v)=0$ and $dt(v)>0$.
\end{proof}

\begin{remark}
In general the function $L$ is only $C^1$ on $TM$ and a smooth Lorentz-Finsler metric outside the span of $\partial_t$.
In fact $L$ is smooth everywhere if and only if $F_t$ is induced by a family of Riemannian metrics, see \cite[Proposition 4.1]{Warner65}.
As $L$ is always smooth around the boundary of the cones $C$, its null geodesics are well-defined and coincide up to parametrization with the cone geodesics of $C$.
\end{remark}

The positive path $(\phi_t)_{t\in\R}$ can be expressed in terms of the null geodesic flow of the metric $L$ as follows.
Note that any path of contactomorphisms $(f_t)_{t\in\R}$ uniquely defines a time-dependent contact vector field $X_t^{f}$ by the formula
$$X_t^f( f_t^{-1}(p)):= \frac{d}{dt}f_t(p).$$
Conversely, any path of contactomorphisms is uniquely determined by integrating a given time dependent contact vector field.
The vector field of a path can be expressed in terms of the vector field of the inverse path $(f_t^{-1})_{t\in\R}$
\begin{align}\label{eq1}
X_t^{f}=-df_t(X^{f^{-1}}_t\circ f_t^{-1})
\end{align}
by deriving the equation $f_t\circ f_t^{-1}=\mathrm{id}$.
It can be seen from this formula that if $(f_t)_{t\in\R}$ is a positive path, then $(f^{-1}_t)_{t\in\R}$ is negatively transverse to the contact structure.

Further recall from Example \ref{exmpReeb} that any finsler metric on $\Sigma$ uniquely determines a contact form on $ST^{\ast}\Sigma$ whose Reeb vector field is given by the cogeodesic vector field by restricting the canonical Liouville form to the unit cotangent bundle of the Finsler metric.

\begin{theorem}\label{ThmReebPath}
Let $M\cong\R\times \Sigma$, $C$ and $L$ be as above.
Then
$$X_t^{\phi}=R_{\alpha_t}.$$
Here $R_{\alpha_t}$ denotes the Reeb vector field of the contact form $\alpha_t$ induced by the Finsler metric $F_t$ on $ST^{\ast}\Sigma$.
\end{theorem}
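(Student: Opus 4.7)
The strategy is to verify $\alpha_t(X_t^\phi) \equiv 1$, which is sufficient because $X_t^\phi$ is a contact vector field on $(ST^*\Sigma, \xi)$ (as $\phi_t$ is a contactomorphism), and the Reeb field $R_{\alpha_t}$ is characterized among contact vector fields by this normalization. Indeed, if $X$ is a contact vector field with $\alpha_t(X) = 1$, then $L_X \alpha_t = \iota_X d\alpha_t + d(\alpha_t(X)) = \iota_X d\alpha_t$; pairing with the Reeb field and using $\iota_{R_{\alpha_t}} d\alpha_t = 0$ forces $\iota_X d\alpha_t = 0$, so $X = R_{\alpha_t}$.

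Fix $v \in ST^*\Sigma$ and let $\eta$ represent $\rho_t^{-1}(v) \in \mathcal{N}_C$, parametrized by the time coordinate so that $\eta(s) = (s, \sigma(s)) \in \Sigma_s$ for every $s$. Using the Lorentz-Finsler metric $L = dt^2 - F_t^2$ provided by Lemma \ref{LemtrivialFinsler}, a direct computation of the fundamental tensor at the null vector $\eta'(s) = \partial_t + \sigma'(s)$ (with $F_s(\sigma'(s)) = 1$) yields the block decomposition $g_{\eta'(s)} = dt \otimes dt - g^{F_s}_{\sigma'(s)}$, and hence the Legendre dual
$$g_{\eta'(s)}(\eta'(s), \cdot) = dt - p(s),$$
where $p(s) \in T^*\Sigma$ denotes the $F_s$-Legendre dual of $\sigma'(s)$, satisfying $F_s^*(p(s)) = 1$. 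Unwinding the explicit description of $\rho_s$ from the remark following Proposition \ref{Propglobhypcont}, this identifies $\phi_s(v) = \rho_s([\eta])$ with a smoothly varying curve in the unit cotangent bundle $\{F_s^* = 1\}$ of $F_s$, which is precisely the section of the $\R_{>0}$-action on which the contact form $\alpha_s = \lambda|_{\{F_s^* = 1\}}$ lives.

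Next I would identify the motion of $p(s)$ with the cogeodesic flow. The cogeodesic Hamiltonian on $T^*M$ corresponding to $L$ has the form $H = \tau^2 - (F_t^*)^2$; restricting its Hamiltonian flow to the null hypersurface $\{H = 0, \tau > 0\}$ and reparametrizing by the time coordinate $t$, the spatial factor $(x(s), p(s)) \in T^*\Sigma$ obeys the Hamilton equations of $F_s^*$ at each instant. In other words, $p(s)$ is an integral curve of the cogeodesic vector field $X_{F_s^*}$ on $\{F_s^* = 1\}$, which by Example \ref{exmpReeb} coincides with the Reeb field $R_{\alpha_s}$. The final evaluation is then Euler's identity for the positively 1-homogeneous function $F_t^*$:
$$\alpha_t(X_t^\phi(v)) = \lambda_{p(t)}\bigl(X_{F_t^*}\big|_{p(t)}\bigr) = p(t) \cdot \partial_p F_t^* = F_t^*(p(t)) = 1.$$

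The main technical obstacle is the careful bookkeeping of sign conventions, especially the coorientation reversal built into $\rho_s$ flagged in the Warning after Proposition \ref{Propglobhypcont}, together with the possible non-reversibility of $F_t$. One must check that the curve representing $\phi_s(v)$ in $T^*\Sigma$ admits a smooth lift to the section $\{F_s^* = 1\}$ that agrees at every time with the cogeodesic orbit and is oriented consistently with the $\lambda$-coorientation used to define $\alpha_s$; the positivity of $(\phi_t)$ established in Theorem \ref{thmpospath} is the cross-check that rules out a global sign error. Once these identifications are pinned down, the conclusion is forced by the 1-homogeneity of $F_t^*$ and the standard identification of Reeb and cogeodesic flows.
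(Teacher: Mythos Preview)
Your opening reduction is exactly the paper's: both argue that since $X_t^\phi$ is a contact vector field, it suffices to check $\alpha_t(X_t^\phi)\equiv 1$. The divergence is in how this number is computed.

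The paper's evaluation is shorter and avoids the Hamiltonian detour entirely. Because $\alpha_t$ is the Liouville form restricted to the $F_t$-unit cotangent bundle, $(\alpha_t)_u(w)$ depends only on $d\pi(w)$; and $d\pi\bigl(X_t^\phi(\phi_t(v))\bigr)=\frac{d}{dt}\pi(\phi_t(v))=\tilde w_t$ is immediate from $\pi\circ\phi_t(v)=\gamma_v(t)$, where $\tilde w_t=\gamma_v'(t)-\partial_t$. Since $\phi_t(v)=g^t_{\tilde w_t}(\tilde w_t,\cdot)$ by the definition of $\rho_t$, one gets
\[
(\alpha_t)_{\phi_t(v)}\bigl(X_t^\phi(\phi_t(v))\bigr)=g^t_{\tilde w_t}(\tilde w_t,\tilde w_t)=F_t(\tilde w_t)^2=1,
\]
using only that $\gamma_v$ is $L$-null. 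No cotangent Hamiltonian, no reparametrization, and the coorientation reversal of $\rho_t$ never enters because the computation stays on the tangent side of the spatial Legendre transform.

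Your route---reducing the spacetime cogeodesic flow of $H=\tau^2-(F_t^*)^2$ to the time-dependent spatial cogeodesic flow---is correct once the signs are sorted out, but it is also stronger than needed: if $p(s)$ really follows the $F_s^*$-cogeodesic flow, then $X_t^\phi=R_{\alpha_t}$ already by Example~\ref{exmpReeb}, and your final Euler-identity step becomes a redundant consistency check rather than the actual proof. In effect you are proving the theorem twice. The genuine content of your computation collapses to $\lambda_{p(t)}(\dot p(t))=p(t)\bigl(\sigma'(t)\bigr)=g^{F_t}_{\sigma'(t)}\bigl(\sigma'(t),\sigma'(t)\bigr)=1$, which is precisely the paper's one-line calculation; the identification with $X_{F_t^*}$ is not used in that line.
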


\begin{proof}

First note that the fact that $X_t^{\phi}$ is a contact vector field positively transverse to the contact structure, implies that there exists a unique family of contact forms $\beta_t$ such that at fixed time $t$ the vector field $X_t^{\phi}$ is the Reeb vector field of $\beta_t$.
The contact form $\beta_t$ is uniquely determined by the property $\beta_t(X_t^{\phi})=1$.
The condition $d\beta_t(X_t^{\phi},\cdot)=0$ is automatically satisfied since $X_t^{\phi}$ is a contact vector field, see Lemma \ref{LemPathReeb} below for details.
It remains to show that $\beta_t=\alpha_t$ is the contact form induced by the Finsler metric $F_t$.

As explained in Example \ref{exmpcotangent} the contact form $\alpha_t$ is constructed in the following way.
We identify $ST^{\ast}\Sigma$ with the set $\{g^t_w(w,\cdot)|F_t(w)=1\}$, where $g^t$ denotes the fundamental tensor of $F_t$.
Then at $v=g^t_w(w,\cdot)$ the contact form is given by $(\alpha_t)_v(u)=g^t_w(w,d\pi(u))$.
It further follows from the definition of the contactomorphism $\rho_t$ that for any $v\in ST^{\ast}\Sigma$ we have 
$\phi_t(v)=g^t_{\tilde{w}_t}(\tilde{w}_t,\cdot)$.
Here $g^t$ denotes the fundamental tensor of $F_t$ and $\tilde{w}_t=\gamma_v'(t)-\partial_t$ is the part of the derivative of  $\gamma_v(t)=(t,\pi(\phi_t(v)))$ that is tangent to $\Sigma$.
One computes
$$d\pi\left(X_t^{\phi}\left(\phi_t(v)\right)\right)=\frac{d}{dt}\pi(\phi_t(v))=\tilde{w}_t.$$
Hence for every $v\in ST^{\ast}\Sigma$
$$(\alpha_t)_{\phi_t(v)}\left(X_t^{\phi}\left(\phi_t(v)\right)\right)=g^t_{\tilde{w}_t}(\tilde{w}_t,\tilde{w}_t)=F_t(\tilde{w}_t)=1.$$
Here we used that $L(\gamma_v'(t))=0$, i.e., $F_t(\tilde{w}_t)=1$.
\end{proof}

\subsection{The cone structure induced by a positive path}

Proposition \ref{proppositivecausal} suggests that there is a strong connection between the concepts of causality for cone structures and positivity in contact manifolds.
In the following we will construct a cone structure associated to a positive path of contactomorphisms in a spherical cotangent bundle.
The construction can be motivated as follows.

Let $C$ be a strongly convex globally hyperbolic cone structure on $M=\R\times\Sigma$ and $(\phi_t)_{t\in\R}$ the positive path constructed in the previous section.
Consider a curve $\gamma(s)=(\gamma_0(s),\tilde{\gamma}(s))$ in $\real{}\times \Sigma$.
Proposition \ref{proppositivecausal} implies that $\gamma$ is future pointing causal at $s_0$ if and only if $S(\gamma(s))$ is non-negative near $s_0$, see Definition \ref{deflegpositive}.
Since $\rho_0$ is coorientation reversing this implies that $\rho_0(S(\gamma(s))$ is a Legendrian isotopy in $ST^{\ast}\Sigma$ that is non-positively transverse to $\xi$.
The sky $S(\gamma(s))$ can be written in terms of the fibres of $ST^{\ast}\Sigma$ as $\rho_0(S(\gamma(s)))=\phi^{-1}_{\gamma_0(s)}(F_{\tilde{\gamma}(s)})$.
Choose a Legendrian isotopy $\abb{\iota_s}{S^{n-1}}{ST^{\ast}\Sigma}$ with $\iota_s(S^{n-1})=F_{\tilde{\gamma}(s)}$.
Pick a contact form $\alpha$ on $ST^{\ast}\Sigma$ inducing the standard coorientation.
Further pick $u\in S^{n-1}$ and set $w=\phi^{-1}_{\gamma_0(s_0)}(\iota_{s_0}(u))$.
Then the non-positivity condition of the Legendrian isotopy at $u$ can be written as
\begin{align*}
&\alpha_{w}\left(\frac{d}{ds}|_{s=s_0}\phi^{-1}_{\gamma_0(s)}(\iota_s(u))\right)\\
&=\alpha_{w}\left(d\phi^{-1}_{\gamma_0(s_0)}\left(\frac{d}{ds}|_{s=s_0}\iota_s(u)\right)+\gamma_0'(s_0) X^{\phi^{-1}}_{\gamma_0(s_0)}(w)\right)\\
&=\left(\left(\phi^{-1}_{\gamma_0(s_0)}\right)^{\ast}\alpha\right)_{\iota_{s_0}(u)}\left(\hat{\gamma}'(s_0)\right)+\gamma_0'(s_0)\alpha_w\left( X^{\phi^{-1}}_{\gamma_0(s_0)}(w)\right)\leq 0.
\end{align*}
Here $\hat{\gamma}'(s_0)$ denotes any vector in $T_{\iota(s_0(u))}ST^{\ast}\Sigma$ with $d\pi(\hat{\gamma}'(s_0))=\tilde{\gamma}'(s_0)$.
This  is non-positive for any $u\in S^{n-1}$ if and only if
$$\max\limits_{v\in F_{\tilde{\gamma}(s_0)}}\left( ((\phi^{-1}_{\gamma_0(s_0)})^{\ast}\alpha)_{v}(\hat{\gamma}'(s_0))+\gamma_0'(s_0)\alpha_{\phi^{-1}_{\gamma_0(s_0)}(v)}\left( X^{\phi^{-1}}_{\gamma_0(s_0)}\left(\phi^{-1}_{\gamma_0(s_0)}(v)\right)\right)\right)\leq 0.$$
For an arbitrary positive path of contactomorphisms this motivates the following definition.
Given a positive path of contactomorphisms $(f_t)_{t\in\R}$ inducing the time dependent vector field $X_t^f$, there exists a unique family of contact forms $(\alpha^f_t)_{t\in\R}$ with $\alpha^f_t\left(X_t^f\right)\equiv1$.
Any two contact forms inducing the same contact structure $\xi$ are related by a non-vanishing function.
Given a fixed contact form $\alpha$ the positivity of $(f_t)_{t\in\R}$ implies that the contact forms 
$$\alpha^f_t:=\frac{1}{\alpha\left(X_t^f\right)}\alpha$$
are well-defined. 

\begin{lemma}\label{LemPathReeb}
For fixed time $t$ the contact vector field $X^{f}_t$ is the Reeb vector field of $\alpha^f_t$.
Moreover, $\alpha_t^f=(f_t^{-1})^{\ast}\hat{\alpha}_t^f$, where $\hat{\alpha}_t^f$ is the unique family of contact forms satisfying $\hat{\alpha}_t^f\left(X_t^{f^{-1}}\right)=-1$.
\end{lemma}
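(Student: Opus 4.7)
The plan is to verify each assertion separately, each being a short computation once the right characterization of the Reeb vector field is in place.

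For the first claim, I would use that a vector field $R$ on a contact manifold $(N,\xi)$ with a chosen contact form $\beta$ is the Reeb vector field of $\beta$ if and only if $\beta(R)\equiv 1$ and $\iota_R d\beta=0$. The first of these holds by construction of $\alpha_t^f$: indeed, $\alpha_t^f(X_t^f)=\frac{\alpha(X_t^f)}{\alpha(X_t^f)}=1$ identically on $ST^{\ast}\Sigma$. For the second, note that $X_t^f$ is a contact vector field at each fixed $t$, so $\mathcal{L}_{X_t^f}\alpha_t^f=\mu\,\alpha_t^f$ for some function $\mu$ (here one uses that $X_t^f$ preserves $\xi=\ker\alpha_t^f$ together with the standard characterization of contact vector fields). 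Cartan's magic formula gives
\begin{equation*}
d\bigl(\alpha_t^f(X_t^f)\bigr)+\iota_{X_t^f}d\alpha_t^f=\mu\,\alpha_t^f.
\end{equation*}
The first term vanishes because $\alpha_t^f(X_t^f)\equiv 1$, so $\iota_{X_t^f}d\alpha_t^f=\mu\,\alpha_t^f$. Evaluating both sides on $X_t^f$ gives $0=\mu$, hence $\iota_{X_t^f}d\alpha_t^f=0$, as required.

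For the second claim, I would exploit that, by the previous paragraph, $\alpha_t^f$ is uniquely characterized among contact forms for $\xi$ by the two conditions of inducing the same coorientation as $\alpha$ (which is equivalent to being positive on $X_t^f$) and satisfying $\alpha_t^f(X_t^f)=1$. So it suffices to show that the pulled-back form $(f_t^{-1})^{\ast}\hat{\alpha}_t^f$ enjoys the same two properties. Since $f_t^{-1}$ is a contactomorphism, $(f_t^{-1})^{\ast}\hat{\alpha}_t^f$ is again a contact form for $\xi$. The key computation uses equation (\ref{eq1}), which rewrites as $df_t^{-1}\circ X_t^f = -X_t^{f^{-1}}\circ f_t^{-1}$; therefore
\begin{equation*}
\bigl((f_t^{-1})^{\ast}\hat{\alpha}_t^f\bigr)(X_t^f)=\hat{\alpha}_t^f\bigl(df_t^{-1}(X_t^f)\bigr)=\hat{\alpha}_t^f\bigl(-X_t^{f^{-1}}\circ f_t^{-1}\bigr)=-(-1)=1,
\end{equation*}
using the defining property $\hat{\alpha}_t^f(X_t^{f^{-1}})=-1$. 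In particular the value is positive, fixing the coorientation as well. By uniqueness, $\alpha_t^f=(f_t^{-1})^{\ast}\hat{\alpha}_t^f$.

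The only thing that requires genuine care, and which I expect to be the main potential pitfall, is the sign in the pullback identity between $X_t^f$ and $X_t^{f^{-1}}$; getting this straight is why the formula involves $\hat{\alpha}_t^f(X_t^{f^{-1}})=-1$ rather than $+1$. Everything else is immediate from the definitions and Cartan's formula, so no additional input beyond the material already developed in the excerpt is needed.
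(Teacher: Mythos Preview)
Your proof is correct and follows essentially the same approach as the paper. For the first claim both arguments show $\iota_{X_t^f}d\alpha_t^f$ is a functional multiple of $\alpha_t^f$ (via Cartan's formula and the fact that $X_t^f$ preserves $\xi$) and then use skew-symmetry of $d\alpha_t^f$ to kill the multiple; the only cosmetic difference is that the paper encodes the contact condition through $f_s^{\ast}\alpha_t^f=h_{t,s}\alpha_t^f$ and differentiates in $s$, whereas you work directly with $\mathcal{L}_{X_t^f}\alpha_t^f=\mu\,\alpha_t^f$, which is slightly cleaner. For the second claim the paper merely says it follows from equation~(\ref{eq1}) and the definition of the Reeb vector field, and your computation is exactly the intended unpacking of that remark.
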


\begin{proof}
By definition we have $\alpha^f_t\left(X_t^f\right)=1$.
It remains to show that $X_t^f$ lies in the kernel of $d\alpha^f_t$.
Note that $f_s^{\ast}\alpha^f_t=h_{t,s}\alpha^f_t$ for some family of positive functions $h_{t,s}$.
In particular $\frac{d}{ds}f_s^{\ast}\alpha^f_t=\left(\frac{d}{ds}h_{t,s}\right)\alpha^f_t$.
Using Cartan's formula for the Lie-derivative of time-dependent vector fields \cite[Lemma B.2]{Geiges} one computes
\begin{align*}
f_t^{\ast}d\alpha^f_t\left(X_t^f,\cdot\right)= \left(\frac{d}{ds}|_{s=t}h_{t,s}\right)\alpha^f_t.
\end{align*}
Since $TST^{\ast}\Sigma$ decomposes into $\xi=\ker \alpha^f_t$ and the span of $X_t^f$ it follows that $d\alpha^f_t\left(X_t^f,\cdot\right)=0$.

The second claim follows from equation (\ref{eq1}) and the definition of the Reeb vector field.
\end{proof}

The above observation for globally hyperbolic cone structures together with Lemma \ref{LemPathReeb} motivates the following definition.

\begin{definition}\label{defconepath}
Let $(f_t)_{t\in\real{}}$ be a positive path of contactomorphisms on $ST^{\ast}\Sigma$.
For $(t,p)\in\real{}\times \Sigma$ define 
\begin{align*}
C_{f}(t,p)&:=\\
&\left\lbrace(w_0,w)\in T_{(t,p)}(\real{}\times \Sigma)|\max\limits_{v\in F_{p}} (\alpha^f_t)_{v}(\hat{w})-w_0\leq 0\right\rbrace
\end{align*}
and
$$C_{f}:=\bigcup\limits_{(t,p)\in\real{}\times \Sigma}C_{f}(t,p).$$
As before $\hat{w}$ denotes any vector in $T_vST^{\ast}\Sigma$ with $d\pi(\hat{w})=w$.
\end{definition}

Recall from Example \ref{exmpcotangent} that given a contact form $\alpha$ on $ST^{\ast}\Sigma$ this contact form is induced by a fibre-wise star-shaped set $K_{\alpha}\subset T^{\ast}\Sigma$ with smooth boundary centred around the zero section.

\begin{nota}
Given a positive path $(f_t)_{t\in\R}$, we denote with $K_t^f:=K_{\alpha_t^f}\subset T^{\ast}\Sigma$ the fibre-wise star-shaped set inducing the contact forms $\alpha_t^f$ defined above.
\end{nota}

Given a subset $A\subset (\real{n})^{\ast}$ its \textbf{polar set} is defined as
$$A^{\circ}:=\{w\in \real{n}|v(w)\leq 1 \forall v\in A\}.$$

\begin{theorem}\label{lempolar}
Let $\abb{f_t}{ST^{\ast}\Sigma}{ST^{\ast}\Sigma}$ be a positive path of contactomorphisms.
Then $C_{f}$ is the cone defined by the fibre-wise polar of $K_t^f$, i.e.,
$$C_{f}=\bigcup\limits_{s>0}s\left(\partial_t+(K_t^f)^{\circ}\right).$$
In particular, $C_f$ is a proper closed cone structure.
\end{theorem}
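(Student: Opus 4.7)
The plan is to rewrite the defining condition of $C_f(t,p)$ as a support-function inequality for $K_t^f$ and then invoke polar duality to obtain the description $\bigcup_{s>0} s(\partial_t + (K_t^f)^\circ)$.

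First I would make explicit how $\alpha_t^f$ is encoded by $K_t^f$. Identifying $ST^{\ast}\Sigma$ with $\partial K_t^f$ via the $\R$-action, the contact form $\alpha_t^f$ becomes the restriction of the Liouville form $\lambda$. Hence for $v\in F_p$ corresponding to the unique $\tilde v\in \partial K_t^f\cap T_p^{\ast}\Sigma$ and any lift $\hat w\in T_v ST^{\ast}\Sigma$ of $w\in T_p\Sigma$, one has
$$(\alpha_t^f)_v(\hat w)=\lambda_{\tilde v}(\hat w)=\tilde v(w),$$
which in particular depends only on $w$. Taking the supremum over $v\in F_p$ thus amounts to the supremum of $\tilde v\mapsto \tilde v(w)$ over $\partial K_t^f\cap T_p^{\ast}\Sigma$. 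Using that $K_t^f\cap T_p^{\ast}\Sigma$ is bounded, star-shaped about $0$, and contains $0$ in its interior, any maximizer of $\tilde v(w)$ over $K_t^f\cap T_p^{\ast}\Sigma$ (compact, so the max exists) is non-zero for $w\neq 0$ and lies on a ray meeting $\partial K_t^f$ at $\lambda\tilde v^{\ast}$ with $\lambda\geq 1$; comparing values shows
$$\max_{v\in F_p}(\alpha_t^f)_v(\hat w)=h_{K_t^f\cap T_p^{\ast}\Sigma}(w),$$
where $h_K(w):=\sup_{\tilde v\in K}\tilde v(w)$ is the support function.

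The defining inequality of $C_f(t,p)$ now reads $h_{K_t^f\cap T_p^{\ast}\Sigma}(w)\leq w_0$. Since $0$ lies in the fibrewise interior of $K_t^f$, one has $h_{K_t^f}\geq 0$ with equality only at $w=0$; this forces $w_0\geq 0$, and $w_0=0\Rightarrow w=0$. For $w_0>0$, $1$-homogeneity of the support function turns the inequality into $h_{K_t^f}(w/w_0)\leq 1$, i.e., $w/w_0\in (K_t^f\cap T_p^{\ast}\Sigma)^\circ$. Parametrising by $s=w_0>0$ yields
$$C_f(t,p)\setminus\{0\}=\bigcup_{s>0} s\bigl(\partial_t+(K_t^f\cap T_p^{\ast}\Sigma)^\circ\bigr),$$
which is the claimed formula.

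Finally, the cone-structure properties drop out of this polar presentation: polar sets are automatically closed and convex, and fibre-wise boundedness of $K_t^f$ ensures that $(K_t^f)^\circ$ has non-empty interior; hence each $C_f(t,p)$ is a closed convex cone with non-empty interior, and $-C_f(t,p)\cap C_f(t,p)\subset\{0\}$ since every element of $C_f$ has strictly positive $\partial_t$-component. Global closedness of $C_f\subset TM\setminus\{0\}$ follows from continuous dependence of $(K_t^f)^\circ$ on $(t,p)$, which I would deduce from the smoothness of $f_t$ together with the normalisation $\alpha_t^f(X_t^f)\equiv 1$. The main obstacle is the interchange between the supremum over $\partial K_t^f$ built into $\alpha_t^f$ and the supremum over all of $K_t^f$ required for polar duality; once this is handled via the star-shaped geometry, the remainder reduces to standard convex-duality bookkeeping.
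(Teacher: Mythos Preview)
Your proposal is correct and follows essentially the same route as the paper: identify $\max_{v\in F_p}(\alpha_t^f)_v(\hat w)$ with $\max_{v\in K_t^f(p)} v(w)$, read off the polar condition, and then invoke standard facts about polars to obtain properness. The paper's proof asserts the first identification directly ``by the definition of $K_t^f$'' and cites that the polar of a set equals the polar of its convex hull, whereas you spell out the intermediate step (max over $\partial K_t^f$ equals max over $K_t^f$ via star-shapedness) and the cone properties more explicitly; this is additional care rather than a different argument.
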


\begin{proof}
By the definition of $K^f_t$ we have that 
$$\max\limits_{v\in F_{p}}\alpha_t^f(\hat{w})+w_0=\max\limits_{v\in K^f_t(p)}v(w)+w_0.$$
Thus $\partial_t+w\in C_{f}(t,p)$ if and only if 
\begin{align*}
 \max\limits_{v\in K_t^f(p)}v(w)\leq 1
\end{align*}

The polar of a set coincides with the polar of its convex hull and is in particular convex (see e.g. \cite{Schneider14}).
Thus $C_{f}(t,p)$ is a proper cone for every $(t,p)\in\real{}\times \Sigma$, i.e., $C_{f}$ is a proper closed cone structure.
\end{proof}

In general the cone structure $C_f$ is not strongly convex.
However, it is naturally induced by a function $G_f$ such that $C_f=G_f^{-1}([0,\infty))$.

\begin{definition}
Let $\abb{f_t}{ST^{\ast}\Sigma}{ST^{\ast}\Sigma}$ be a positive path of contactomorphisms.
Define
\begin{align*}
&\abb{G_{f}}{T(\R\times\Sigma)}{\real{}}\\
&(w_0,w)\mapsto w_0-\max\limits_{v\in F_{p}}\alpha_t^f(\hat{w})=w_0-\max\limits_{v\in K_t^f(p)}v(w).
\end{align*}
\end{definition}

In \cite[Section 2.13]{Minguzzi192} Minguzzi defined locally Lipschitz Lorentz-Finsler spaces, which provide a weaker notion of Lorentz-Finsler metrics suitable for non-strongly convex cone structures.
Recall from this reference that a cone structure $(M,C)$ is \textbf{locally Lipschitz} iff the set-valued map $p\mapsto C(p)$ is locally Lipschitz in the following way.
Let $p\in M$ and $U$ be an open neighbourhood diffeomorphic to $\R^{n+1}$.
The unit tangent bundle of $TU$ with respect to some auxilliary Riemannian metric admits a trivialization $STU\cong R^{n+1}\times S^n$.
Then $C|_U$ induces a map $q\mapsto C(q)\cap \{q\}\times S^n$ from $U$ to the space of compact subsets of $S^n$.
The cone structure $C$ is locally Lipschitz if and only if for every $p$ there exists a neighbourhood $U$ like above such that map from $U$ to the space of compact subsets of $S^n$ induced by $C$ is Lipschitz with respect to the Riemannian metric on $U$ and the Hausdorff-distance $d_H$ on the space of compact subsets of $S^n$.
Equivalently one can choose $U$ small enough such that the first coordinate $x_0$ in $\R^{n+1}$ satisfies $\partial_{x_0}\in \mathrm{int}(C_U)$ and define a map from $U$ to the space of bounded convex subsets of $\R^n$ by looking at $C(q)\cap \{dx_0=1\}$.

\begin{definition}
A locally Lipschitz \textbf{Lorentz-Finsler space} $(M,C,G)$ is a proper cone structure $(M,C)$ together with a positively $1$-homogeneous concave map $\abb{G}{C}{\real{}_{\geq 0}}$ such that the following holds.
The set
$$C^{\times}:=\{(w_0,w)\in T(\R\times M)|w\in C \text{ and } |w_0|\leq G(w)\}$$
is a locally Lipschitz proper closed cone structure on $\R\times M$.
\end{definition}

\begin{theorem}\label{thmfinslerspace}
Let $\abb{f_t}{ST^{\ast}\Sigma}{ST^{\ast}\Sigma}$ be a positive path of contactomorphisms.
Then $(\real{}\times \Sigma,C_f, G_{f})$ is a locally Lipschitz Lorentz-Finsler space.
\end{theorem}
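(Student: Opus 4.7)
The plan is to verify, in order, the three conditions in the definition preceding Theorem \ref{thmfinslerspace}: that $C_f$ is a proper cone structure (already supplied by Theorem \ref{lempolar}), that $G_f$ is a positively $1$-homogeneous, concave map from $C_f$ to $\R_{\geq 0}$, and that the extended cone structure $C_f^{\times}$ on $\R\times(\R\times\Sigma)$ is proper, closed, and locally Lipschitz.

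Abbreviating the support function of $K_t^f(p)$ as $\phi_{t,p}(w):=\max_{v\in K_t^f(p)}v(w)$, we have $G_f(w_0,w)=w_0-\phi_{t,p}(w)$. Since $\phi_{t,p}$ is the supremum of linear functions in $w$, it is convex and positively $1$-homogeneous; hence $G_f$ is concave and positively $1$-homogeneous in $(w_0,w)$. Non-negativity on $C_f$ is immediate from the polar description in Theorem \ref{lempolar}: a vector in $C_f(t,p)$ has the form $s(\partial_t+u)=(s,su)$ with $u\in (K_t^f(p))^{\circ}$ and $s>0$, whence $\phi_{t,p}(w)=s\phi_{t,p}(u)\leq s=w_0$.

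Writing tangent vectors to $\R\times(\R\times\Sigma)$ in the form $(s_0,w_0,w)$ with $s_0$ along the new $\R$ factor, the defining condition of $C_f^{\times}(t,p)$ becomes
\[
\phi_{t,p}(w)\leq w_0-s_0\quad\text{and}\quad\phi_{t,p}(w)\leq w_0+s_0.
\]
This exhibits $C_f^{\times}(t,p)$ as the intersection of two cones, each obtained from $C_f$ by a linear change of coordinates, so it is convex and closed. Properness follows because $(0,1,0)$ lies in its interior (one has $\phi_{t,p}(0)=0<1$), while if $(s_0,w_0,w)\in C_f^{\times}$ then $(w_0,w)\in C_f$, so $-(w_0,w)\notin C_f$ unless zero, which in turn forces $s_0=0$ and rules out the opposite vector.

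The main obstacle is the local Lipschitz regularity of $C_f^{\times}$. Smoothness of the path $(f_t)$ yields smoothness of the time-dependent contact vector field $X_t^f$ in $(t,p)$, hence of the family of contact forms $\alpha_t^f=\alpha/\alpha(X_t^f)$, and thus of the fibrewise bounded star-shaped domains $K_t^f\subset T^*\Sigma$ inducing them (via their radial parametrizations over the fibres of $ST^*\Sigma$). The support function $(t,p,w)\mapsto\phi_{t,p}(w)$ is therefore smooth in $(t,p)$ and Lipschitz on compact subsets in $w$. Using $\partial_{w_0}$ as the transverse time direction for $C_f^{\times}$, the compact slices $C_f^{\times}(t,p)\cap\{w_0=1\}$ are described by $\{(s_0,w):\phi_{t,p}(w)+|s_0|\leq 1\}$; Lipschitz dependence of $\phi_{t,p}$ on $(t,p)$ then translates directly into Lipschitz variation of these slices in the Hausdorff distance, which is the criterion used in the paper to define locally Lipschitz cone structures.
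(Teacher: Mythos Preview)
Your argument is correct and structurally the same as the paper's: verify homogeneity and concavity of $G_f$, then show $C_f^{\times}$ is a proper closed locally Lipschitz cone structure by slicing at $\{w_0=1\}$ (the paper's $\{dt=1\}$). You even supply details the paper leaves implicit, namely non-negativity of $G_f$ on $C_f$ and the properness/closedness of $C_f^{\times}$.

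The one substantive difference is how the Lipschitz step is executed. The paper identifies the slice explicitly as the polar $A^{\circ}$ of $A=\bigcup_{\lambda\in[-1,1]}(K_t^f+\lambda\,ds)$ and then chains three cited facts from convex geometry: $A$ varies Lipschitz in Hausdorff distance because $K_t^f$ does; the convex hull operator is Lipschitz \cite{Moszynska}; and the polar operator is locally Lipschitz on convex bodies containing the origin \cite{Moszynska}. You instead describe the same slice by the inequality $\phi_{t,p}(w)+|s_0|\leq 1$ and assert that Lipschitz dependence of $\phi_{t,p}$ ``translates directly'' into Lipschitz variation of the slice in Hausdorff distance. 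This is true but not quite immediate: since $\phi_{t,p}(w)+|s_0|$ is the support function of $\mathrm{conv}(A)$, your assertion is exactly the local Lipschitz continuity of the polar operator that the paper cites. If you want to avoid that citation, you need a short quantitative argument (e.g., if both slices sit in a ball of radius $R$ and their gauge functions differ by at most $\varepsilon$ on the unit sphere, then each slice lies within distance $C(R)\varepsilon$ of the other). Either way, your route and the paper's are dual formulations of the same computation.
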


\begin{proof}
Clearly $G_{f}$ is positive homogeneous and concave by the properties of the maximum.
It remains to show that $ (s,t,p)\mapsto C_{f}^{\times}(s,t,p)$ is locally Lipschitz as a set valued map from $\real{2}\times \Sigma$ to $T(\real{2}\times \Sigma)$.

By the construction of $C_f$, the coordinate $t$ of the second $\R$ factor satisfies that $\partial_t\in \mathrm{int}(C_{f}^{\times})$.
It follows from the definitions that 

\begin{align*}
C_{f}^{\times}(s,t,p)\cap \{dt=1\}&=\left\lbrace(w_0,w_1,w)\in T(\real{2}\times\Sigma)\left|1-\max\limits_{v\in K_t^f(p)}v(w)\geq |w_0|\right.\right\rbrace\\
&=\left\lbrace(w_0,w_1,w)\in T(\real{2}\times\Sigma)\left|\max\limits_{v\in K_t^f(p)}v(w)+|w_0|\leq 1\right.\right\rbrace\\
&=A^{\circ}(s,t,p),
\end{align*}
where $A^{\circ}(s,t,p)\subset T_{(s,t,p)}(\R^2\times\Sigma)$ is the polar of the set 
$$A(s,t,p):=\bigcup_{\lambda\in[-1,1]}\left(K_t^f(p)+\lambda ds\right).$$

Since $\alpha^f_t$ is a smooth family of contact forms, the map 
$$(s,t,p)\mapsto \bigcup\limits_{\lambda\in[-1,1]}( K_t^f(p)+\lambda ds) $$
is locally Lipschitz with respect to the Hausdorff distance.
The same holds for the convex hull operator \cite[Theorem 3.2.10]{Moszynska}.
Since the polar of a set coincides with the polar of its convex hull and the polar operator $K\mapsto K^{\circ}$ is locally Lipschitz on the space of convex bodies containing the origin \cite[Theorem 13.3.4]{Moszynska}, the result follows.
\end{proof}

A natural  question arising from Theorem \ref{thmfinslerspace} is how these results relate to the constructions of the previous section.
In fact one can show the following correspondence.

\begin{theorem}\label{Thmcorrespondence}
Let $(\R\times\Sigma, C)$ be a strongly convex globally hyperbolic cone structure inducing the positive path $\phi_t$ as in Theorem \ref{thmpospath} and $F_t$ the family of Finsler metrics constructed in Lemma \ref{LemtrivialFinsler}.
Then $C_{\phi}=C$ and $G_{\phi}=dt-F_t$.
\end{theorem}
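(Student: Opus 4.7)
The plan is to reduce Theorem \ref{Thmcorrespondence} to Theorem \ref{ThmReebPath} combined with the Legendre--Fenchel duality between the Finsler norm $F_t$ on $T\Sigma$ and its dual norm on $T^{\ast}\Sigma$, thereby pinning down both $\alpha_t^\phi$ and the associated star-shaped domain $K_t^\phi$ entirely in terms of $F_t$.

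First, I would invoke Theorem \ref{ThmReebPath} to identify the time-dependent contact vector field $X_t^\phi$ generating $(\phi_t)_{t\in\R}$ with the Reeb vector field $R_{\alpha_t}$ of the contact form obtained by restricting the Liouville form to the unit codisc bundle of $F_t$. The contact form $\alpha_t^\phi$ was defined by the normalization $\alpha_t^\phi(X_t^\phi)=1$, and by Lemma \ref{LemPathReeb} it is the unique contact form in the given coorientation class whose Reeb vector field is $X_t^\phi$. Since $\alpha_t(R_{\alpha_t})=1$ and $R_{\alpha_t}=X_t^\phi$, this uniqueness forces $\alpha_t^\phi=\alpha_t$, and hence
$$K_t^\phi(p)=K_{\alpha_t}(p)=\{g^t_w(w,\cdot)\,:\,w\in T_p\Sigma,\;F_t(w)\leq 1\},$$
which is precisely the fibre-wise unit ball of the dual Finsler norm $F_t^{\ast}$ on $T_p^{\ast}\Sigma$.

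Next, I would compute the support function of $K_t^\phi(p)$. Because $C$ is strongly convex, the Finsler metric $F_t$ furnished by Lemma \ref{LemtrivialFinsler} is smooth and strictly convex away from the zero section, so classical Legendre--Fenchel duality applies and the support function of the unit ball of the dual norm recovers the original norm:
$$\max_{v\in K_t^\phi(p)} v(w)=F_t(w),\qquad w\in T_p\Sigma.$$
Substituting this identity into the definition of $G_\phi$ immediately yields $G_\phi(w_0,w)=w_0-F_t(w)=(dt-F_t)(w_0,w)$, giving the second claim. For the first claim, Theorem \ref{lempolar} characterizes
$$C_\phi(t,p)=\{(w_0,w)\,:\,w_0\geq F_t(w)\}\setminus\{0\},$$
which by Lemma \ref{LemtrivialFinsler} is exactly $C(t,p)$, since both are the future proper closed cones whose boundary in $T_{(t,p)}(\R\times\Sigma)\setminus\{0\}$ coincides with $L^{-1}(0)\cap\{dt>0\}$ for $L=dt^2-F_t^2$.

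The only delicate step is ensuring that the normalizations and coorientations used in the definition of $\alpha_t^\phi$ and those used to construct $\alpha_t$ from the Finsler metric $F_t$ are truly compatible; once Theorem \ref{ThmReebPath} delivers the equality of Reeb vector fields, the rest of the argument is a routine unwinding of definitions together with polar duality. I therefore do not expect any genuine technical obstacle beyond careful book-keeping.
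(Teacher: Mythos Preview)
Your proposal is correct and follows essentially the same route as the paper: invoke Theorem \ref{ThmReebPath} to identify $\alpha_t^{\phi}$ with the contact form induced by $F_t$, then use the duality between $F_t$ and its dual unit ball to evaluate the maximum in the definition of $G_\phi$ as $F_t(w)$, from which both $G_\phi=dt-F_t$ and $C_\phi=C$ follow. The paper's proof is more terse---it simply asserts that $\alpha_t^{\phi}$ is induced by $F_t$ and that the resulting maximum equals $F_t(w)$---while you spell out the uniqueness step via Lemma \ref{LemPathReeb} and the Legendre--Fenchel duality explicitly, but the underlying argument is the same.
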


\begin{proof}
We only need to show that $G_{\phi}=dt-F_t$, since 
$$C=\{dt^2-F_t^2\geq 0 \}\cap \{dt>0\}=\{dt-F_t\geq 0 \}.$$
For $(w_0,w)\in T^{\ast}_{(t,p)}(\R\times\Sigma)$ we have that $G_{\phi}(w_0,w)= w_0-\max\limits_{v\in F_{p}} (\alpha^{\phi}_t)_{v}(\hat{w})$, where $\alpha^{\phi}_t$ is the unique family of contact forms with $\alpha_t^{\phi}\left(X_t^{\phi}\right)=1$.
It follows from Theorem \ref{ThmReebPath} that $\alpha_t^{\phi}$ is induced by $F_t$.
Thus
$$G_{\phi}(w_0,w)=w_0-\max\limits_{v\in F_{p}} (\alpha^{\phi}_t)_{v}(\hat{w})=w_0-\max\limits_{v\in K^{\phi}_{t}} v(w)=w_0-F_t(w).$$
\end{proof}

A natural question is to ask about the causality of the cone structure $C_f$ for an arbitrary positive path of contactomorphisms.
In general this question seems quite hard to answer since the boundary of $C_f$ need not to be smooth and can, depending on the fibre-wise star-shaped set $K_t^f$ look quite complicated.
The above considerations imply global hyperbolicity at least in the following situation.

\begin{definition}
We call a positive path of contactomorphisms $f_t$ strongly convex if for each $t\in\R$ the contact form $\alpha^f_t$, where $\alpha^f_t\left(X_t^{f}\right)=1$, is induced by a Finsler metric.
\end{definition}

\begin{theorem}
Let $(f_t)_{t\in\R}$ be a strongly convex positive path of contactomorphisms.
Then $C_f$ is globally hyperbolic and strongly convex and the positive path $(\phi_t)_{t\in\R}$ induced by $C_f$ coincides with $(f_t)_{t\in\R}$.
\end{theorem}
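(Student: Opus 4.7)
The plan is to establish three things in sequence: strong convexity of $C_f$, global hyperbolicity of $(\R \times \Sigma, C_f)$, and the identification $\phi_t = f_t$. I would begin with strong convexity. By the strong convexity hypothesis, each $K_t^f \subset T^*\Sigma$ is the unit codisc bundle of a Finsler metric $F_t$ on $\Sigma$, i.e., $K_t^f = \{F_t^* \le 1\}$ where $F_t^*$ denotes the Legendre dual on $T^*\Sigma$. The fibre-wise polar duality for strongly convex bodies gives $(K_t^f)^{\circ} = \{F_t \le 1\} \subset T\Sigma$, so by Theorem \ref{lempolar},
$$C_f = \{(w_0, w) \in T(\R \times \Sigma) \mid w_0 \ge F_t(w)\},$$
which is the cone of the Lorentz-Finsler function $L = dt^2 - F_t^2$. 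Its boundary $\partial C_f \setminus \{0\} = \{w_0 = F_t(w),\; w \neq 0\}$ is a smooth hypersurface transverse to the fibres (because $F_t$ is smooth off the zero section and depends smoothly on $t$), and the positive semi-definiteness of the second fundamental form with radial kernel reduces directly to the strong convexity of the $F_t$-indicatrix.

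Next I would address global hyperbolicity by showing that $\tau(t,p) := t$ is a Cauchy time function. Transversality of $\Sigma_t = \{t\} \times \Sigma$ to $C_f$ is immediate from $\partial_t \in \mathrm{int}(C_f)$, and strict monotonicity of $\tau$ along future-directed causal curves follows because $(w_0, w) \in C_f \setminus \{0\}$ forces $w_0 \ge F_t(w) \ge 0$ with $w_0 > 0$. For the Cauchy property I would take a future-inextensible causal curve $\gamma(s) = (t(s), p(s))$ and argue by contradiction: if $t(s) \to T < \infty$, then $\int F_{t(s)}(p'(s))\,ds \le T - t(0)$, so $p$ has bounded length in the family $F_t$; using smooth dependence of $F_t$ on the compact interval $[t(0), T]$ one would deduce that $p(s)$ remains in a compact subset of $\Sigma$, contradicting inextensibility.

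The main obstacle lies precisely in this last step: controlling $p(s)$ requires some completeness of the Finsler metrics $F_t$ (equivalently, completeness of the Reeb flow of $\alpha_t^f$ as an autonomous vector field at each fixed $t$), and this does not formally follow from the global definedness of the \emph{non-autonomous} path $(f_t)_{t \in \R}$. The key would be to exploit that $X_t^f$ is the cogeodesic vector field of $F_t^*$ and that its integral curves project, via the construction of $\gamma_v$ in Theorem \ref{thmpospath}, to cone geodesics of $C_f$ that are extendable through every slice, thereby forcing geodesic completeness of $F_t$. Once global hyperbolicity is secured, the identity $\phi_t = f_t$ is quick: Theorem \ref{thmpospath} produces $\phi_t$ from $C_f$, Theorem \ref{ThmReebPath} identifies its generator $X_t^{\phi}$ with the Reeb vector field of the contact form on $ST^*\Sigma$ induced by $F_t$, and this contact form is precisely $\alpha_t^f$ since $K_t^f = \{F_t^* \le 1\}$. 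By Lemma \ref{LemPathReeb}, $X_t^f$ is also the Reeb vector field of $\alpha_t^f$, so $X_t^{\phi} = X_t^f$ as time-dependent vector fields, and since $\phi_0 = f_0 = \mathrm{id}$ we conclude $\phi_t = f_t$ for all $t \in \R$.
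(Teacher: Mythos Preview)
Your arguments for strong convexity of $C_f$ and for the identification $\phi_t=f_t$ are essentially those of the paper. The issue is in the global hyperbolicity step.

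Your primary route---bounding the $F_t$-arc-length of the spatial projection $p(s)$ and concluding that $p(s)$ stays in a compact set---does require completeness of the metrics $F_t$, as you correctly diagnose. But your proposed repair does not work: you argue that because the integral curves of the \emph{time-dependent} field $X_t^f$ project to cone geodesics defined for all $t\in\R$, one can deduce geodesic completeness of $F_t$ for each \emph{fixed} $t$. These are different flows. The global existence of $t\mapsto f_t(v)$ says nothing about the \emph{autonomous} Reeb flow of $\alpha_{t_0}^f$ at a frozen time $t_0$, so there is no reason to expect each individual $F_t$ to be complete, and the argument for general causal curves still has a hole.

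The paper sidesteps this entirely by working only with cone geodesics. Once one knows (via Lemma~\ref{LemPathReeb} and the identification $\alpha_t^f\leftrightarrow F_t$) that the null geodesics of $L=dt^2-F_t^2$ are, up to reparametrization, precisely the curves $\gamma_v(t)=(t,\pi(f_t(v)))$, it follows immediately from the global definition of $(f_t)_{t\in\R}$ that every inextensible cone geodesic is defined for all $t$ and hence meets the acausal slice $\{0\}\times\Sigma$. Then one invokes the classical criterion (Penrose, \emph{Techniques of Differential Topology in Relativity}, Prop.~5.14): a closed acausal hypersurface intersected by every inextensible null geodesic is a Cauchy hypersurface. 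No control of general causal curves, and no completeness of the frozen metrics $F_t$, is needed.
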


\begin{proof}
Let $F_t$ be the family of Finsler metrics inducing the contact forms $\alpha^f_t$.
Then as in the proof of Theorem \ref{Thmcorrespondence} we have that
$$G_f(w_0,w)= w_0-\max\limits_{v\in F_{p}} (\alpha^f_t)_{v}(\hat{w})=w_0-F_t(w).$$
Hence $G_f=dt-F_t$ and $C_f=\{dt^2-F_t^2\geq 0 \}\cap \{dt>0\}$ is strongly convex.
As proved in Lemma \ref{LemPathReeb} the vector field $X_t^f$ coincides at fixed time $t$ with the Reeb vector field of $\alpha^f_t$ and hence with the cogeodesic flow of $F_t$.
The same holds for the path $(\phi_t)_{t\in\R}$ induced by $C_f$ via the space of cone geodesics by Theorem \ref{ThmReebPath}.
Note that by the strong convexity every cone geodesic is a re-parametrization of a curve of the form $\gamma_v(t)=(t,\pi(f_t(v)))$.
In particular all cone geodesics intersect the acausal hypersurface $\{0\}\times\Sigma$.
Analogously to \cite[Proposition 5.14]{Penrose72} one can show that $\{0\}\times\Sigma$ is a Cauchy hypersurface, i.e., that $C_f$ is globally hyperbolic.
\end{proof}

In view of this theorem it seems plausible that the cone geodesics of $C_f$ for more general paths of contactomorphisms all intersect $\{0\}\times\Sigma$.
Hence we conjecture the following.

\begin{conjecture}
Let $(f_t)_{t\in\R}$ be a positive path of contactomorphisms of $ST^{\ast}\Sigma$.
Then $(\R\times\Sigma,C_f)$ is a globally hyperbolic cone structure.
\end{conjecture}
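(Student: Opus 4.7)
The plan is to show that the coordinate $\tau(t,p)=t$ is a Cauchy time function for $C_f$, which by \cite[Theorem 2.42]{Minguzzi192} gives global hyperbolicity with Cauchy hypersurfaces $\{t\}\times\Sigma$. Two things must be checked: (i) $\tau$ is strictly increasing along every future-directed $C_f$-causal curve, and (ii) $\tau$ is surjective onto $\R$ along every inextensible $C_f$-causal curve.

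For (i), suppose $(w_0,w)\in C_f(t,p)\setminus\{0\}$ with $w_0=0$. The defining inequality of Definition \ref{defconepath} then gives $\max_{v\in K_t^f(p)}v(w)\leq 0$. Since $K_t^f(p)$ is a fibre-wise star-shaped body with smooth boundary containing $0\in T_p^{\ast}\Sigma$ in its interior, it contains an open neighbourhood of the origin, so for any $w\neq 0$ one finds $v\in K_t^f(p)$ with $v(w)>0$, forcing $w=0$ and contradicting nonzeroness. Hence $dt>0$ on $C_f\setminus\{0\}$; every slice $\{t\}\times\Sigma$ is acausal and causality holds automatically.

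For (ii), parametrize an inextensible future-directed causal curve by $t$ as $\gamma(t)=(t,\tilde\gamma(t))$ on a maximal interval $(t_-,t_+)\subset\R$. The causality condition becomes $\tilde\gamma'(t)\in K_t^f(\tilde\gamma(t))^{\circ}$ almost everywhere, and by reversing the path $(f_t)$ it suffices to show $t_+=+\infty$. The key geometric ingredient is that cone geodesics of $C_f$ are all of the form $t\mapsto(t,\pi(f_t(v)))$ and, because $(f_t)_{t\in\R}$ is a globally defined path of contactomorphisms of $ST^{\ast}\Sigma$, are defined for every $t\in\R$. For a basepoint $p=(t_0,x_0)$ and $T>t_0$, the set $L_p(T):=\pi\!\left(f_T\circ f_{t_0}^{-1}(F_{x_0})\right)\subset\Sigma$ is a compact image of a Legendrian sphere which, in analogy with the Penrose-style construction used for the strongly convex case, should coincide with $E^+(p)\cap(\{T\}\times\Sigma)$ and bound a precompact slice of $J^+(p)$. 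Granted this precompactness, $\tilde\gamma([t_0,t_+))$ would be forced to stay in a compact subset of $\Sigma$ whenever $t_+<\infty$; the velocity bound $\tilde\gamma'(t)\in K_t^f(\tilde\gamma(t))^{\circ}$ would then yield a uniform Lipschitz bound on $\tilde\gamma$ on $[t_0,t_+)$, and the resulting limit $\lim_{t\to t_+^-}\tilde\gamma(t)$ would provide a continuous extension past $t_+$, contradicting inextensibility.

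The main obstacle is precisely this precompactness claim in the absence of strong convexity: the horismos may be nonsmooth, $J^+(p)$ need not be closed, and the projection of the Legendrian sphere $f_T\circ f_{t_0}^{-1}(F_{x_0})$ need not be an embedded sphere bounding a disc in $\Sigma$. A natural line of attack is to approximate $K_t^f$ from the interior by a smooth family of strongly convex bodies $K_t^{f,\varepsilon}\subset K_t^f$, inducing strongly convex positive paths $(f_t^{\varepsilon})_{t\in\R}$ and cone structures $C_{f,\varepsilon}\supset C_f$ that are globally hyperbolic with Cauchy time $t$ by the strongly convex theorem preceding the conjecture. Every $C_f$-causal curve is $C_{f,\varepsilon}$-causal and hence confined to a compact $C_{f,\varepsilon}$-causal diamond for each $\varepsilon>0$; extracting a compactness bound uniform in $\varepsilon$ and passing to the limit $\varepsilon\to 0$ should provide the required control on $\tilde\gamma$. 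Making this limiting procedure rigorous, particularly at points where $\partial K_t^f$ fails to be strongly convex and the Reeb flows of $\alpha_t^{f,\varepsilon}$ converge to $X_t^f$ only outside a bad set, is the crux of the conjecture.
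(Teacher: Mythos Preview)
The statement is a \emph{conjecture} in the paper; there is no proof to compare against. The paper explicitly leaves this open, only remarking that global hyperbolicity should stem from completeness of the flow of $X_t^f$, and noting that for incomplete flows (e.g.\ the Reeb field of an incomplete Riemannian metric) the conclusion fails.

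Your proposal is honest about being a strategy rather than a proof, and part (i) is correct and clean: the argument that $K_t^f(p)$ contains a neighbourhood of the origin forces $dt>0$ on $C_f\setminus\{0\}$, so $\tau=t$ is a temporal function and the slices are acausal.

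For part (ii), beyond the gap you already flag, two further issues deserve attention. First, your assertion that ``cone geodesics of $C_f$ are all of the form $t\mapsto(t,\pi(f_t(v)))$'' is not established in the paper outside the strongly convex case; cone geodesics are defined as locally horismotic curves, and without strong convexity of $\partial C_f$ there is no geodesic equation available to identify them with flow lines of $X_t^f$. Second, and more seriously, your approximation scheme presupposes that the interior strongly convex bodies $K_t^{f,\varepsilon}$ arise from a \emph{strongly convex positive path} $(f_t^{\varepsilon})_{t\in\R}$, i.e.\ that the time-dependent Reeb vector fields of the approximating contact forms $\alpha_t^{f,\varepsilon}$ have complete flow. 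This is exactly the completeness issue highlighted in the paper's remark: a smooth family of Finsler metrics $F_t^{\varepsilon}$ on $\Sigma$ need not have complete geodesic flow, and without that the theorem preceding the conjecture does not apply. On noncompact $\Sigma$ there is no obvious reason why an inner convex approximation to $K_t^f$ can be chosen to be the unit disc bundle of a complete Finsler metric, uniformly in $t$. If this step could be carried out, the remainder of your argument (extending a $C_f$-inextensible curve to a $C_{f,\varepsilon}$-inextensible one, which must then cross every $\Sigma_t$, forcing a limit at $t_+$ and hence $C_f$-extensibility via the timelike curve $s\mapsto(t_++s,x_+)$) would go through. So the crux is indeed where you locate it, but it is a genuine obstruction rather than a technicality.
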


\begin{remark}
The global hyperbolicity seems to result from the fact that $(f_t)_{t\in\R}$ is a well-defined path of contactomorphisms, meaning that the flow of the time dependent vector field $X_t^f$ is complete.
Most of the above constructions such as the cone structure $C_f$, which only depends on $X_t^f$, can also be considered for positive time-dependent contact vector fields with non-complete flows.
In these cases one can not expect to obtain a globally hyperbolic cone structure.
If for example $X_t=X$ is the Reeb-vector field induced by the geodesic spray of a Riemannian metric $g$, the cone structure obtained from $X$ is the one of the Lorentzian metric $dt^2-g$ which is globally hyperbolic if and only if $g$ is complete.
\end{remark}

\end{document}